\documentclass[10pt,twoside]{amsart}
\usepackage{amsmath,amssymb,fullpage}
\usepackage{graphicx,psfrag}

%ELA page dimensions
\setlength{\textheight}{210mm}
\setlength{\textwidth}{165mm}
\topmargin = -10mm

\setlength{\parskip}{.1in}

% Box for end of proof outside environment
\def\cvd{~\vbox{\hrule\hbox{%
     \vrule height1.3ex\hskip0.8ex\vrule}\hrule } }

\usepackage[english]{babel}
\usepackage{graphicx,epstopdf,epsfig}
\usepackage{amsfonts,epsfig,fancyhdr,graphics, hyperref,amsmath,amssymb}
\usepackage{epstopdf}
\usepackage{subfigure}
\usepackage{color}
\usepackage{xcolor}
\usepackage{colortbl}
\usepackage{enumerate}
\usepackage{tikz}
\usepackage{amsmath}
\usepackage{pgf, tikz}
\usetikzlibrary{arrows, automata}
\usepackage{graphicx}
\usepackage{amssymb}
\usepackage{mathrsfs}
\usepackage[all]{xy}
\usepackage{amsfonts}
\usepackage{float}
\usepackage{enumerate}
\usepackage{amsmath}
\usepackage{cases}
\usepackage{amsfonts}
\usepackage{graphicx,psfrag}
\usepackage{array}
\usepackage{colortbl}
\usepackage{amsmath}
\usepackage{fancyhdr}
\usepackage{epstopdf}
\usepackage{subfigure}
\usepackage{color}

\newtheorem{theorem}{Theorem}[section]
\newtheorem{corollary}[theorem]{Corollary}
\newtheorem{lemma}[theorem]{Lemma}
\newtheorem{conjecture}[theorem]{Conjecture}

\newtheorem{example}[theorem]{Example}

%Please insert names of author(s) and short title for running headers
\newcommand{\Names}{Megan Bennett and Lei Cao}
\newcommand{\Title}{Flag-Shaped Blockers of 123-Avoiding Permutation Matrices}

\title[Flag-Shaped Blockers of 123-Avoiding Permutation Matrices]{Flag-Shaped Blockers of 123-Avoiding Permutation Matrices}

\author{Megan Bennett$^\ast$
\and
Lei Cao$^{\ast\ast}$}

\thanks{ $^\ast$  Corresponding Author, Department of Mathematics, Halmos College of Arts \& Sciences and Farquhar Honors College, Nova Southeastern University, Fort Lauderdale, FL 33328, USA (mb3836@mynsu.nova.edu). Supported by the NSU PanSGA grant, Farquhar Honors College Travel Funding, and NSU pfrdg-334925.}

\thanks{ $^{\ast\ast}$ Department of Mathematics, Halmos College of Arts \& Sciences, Nova Southeastern University, Fort Lauderdale, FL 33328, USA (lcao@nova.edu). Supported by NSU pfrdg-334925}

\markboth{\Names}{\Title}

%\subjclass[2010]{15A51; 15A83 }
%\keywords{ Doubly substochastic matrices; Sub-defect; Maximum diagonal sum.}

%Setting up to have Theorems indexed by section

%\renewtheorem{theorem}{Theorem}[section]

\begin{document}

\bibliographystyle{plain}

%  Leave these commented lines here
%\input{ELAheader-template.tex}
% ELA insert correct page number
\setcounter{page}{1}

\thispagestyle{empty}

\maketitle

\begin{abstract}
A blocker of $123$-avoiding permutation matrices refers to the set of zeros contained within an $n\times n$ $123$-forcing matrix. Recently, Brualdi and Cao provided a characterization of all minimal blockers, which are blockers with a cardinality of $n$. Building upon their work, a new type of blocker, flag-shaped blockers, which can be seen as a generalization of the $L$-shaped blockers defined by Brualdi and Cao, are introduced. It is demonstrated that all flag-shaped blockers are minimum blockers. The possible cardinalities of flag-shaped blockers are also determined, and the dimensions of subpolytopes that are defined by flag-shaped blockers are examined.
\end{abstract}

%\begin{keywords}
\textbf{Key words.}
123-pattern, 123-avoiding permutation matrices, blockers
%\end{keywords}

%\begin{AMS}
\textbf{AMS subject classifications.}
05A05, 15A03, 15B99, 52A20. 
%\end{AMS}

\section{Introduction}
Let $n$ be a positive integer and let ${\mathcal P}_n$ be the set of $n\times n$ permutation matrices corresponding to the set ${\mathcal S}_n$ of permutations of $\{1,2,\ldots,n\}$.
A permutation $\sigma$ of $\{1,2,\ldots,n\}$ {\it contains a $123$-pattern} provided it contains an increasing subsequence of length $3$ and, otherwise, is {\it $123$-avoiding}. In terms of the $n\times n$ permutation matrix $P$ corresponding to $\sigma$, $P$ contains a $123$-pattern provided the $3\times 3$ identity matrix  $I_3$ is a submatrix of $P$.

If $A$ is an $n\times n$ $(0,1)$-matrix, then $A$ is {\it $123$-forcing} provided every permutation matrix $P\le A$ (entrywise order) contains a $123$-pattern; the matrix $A$ thus {\it blocks} all $123$-avoiding permutations in that every $123$-avoiding permutation matrix has at least one $1$ in a position of a $0$ of $A$. The set of zeros contained in a $123$-forcing matrix $A$ is called a blocker of $123$-avoiding permutation matrices. A blocker $\mathcal{B}$ is \textit{minimum} if removing any element from $\mathcal{B}$ makes it no longer a blocker. If there does not exist a blocker $\mathcal{D}$ such that the cardinality of $\mathcal{D}$ is less than the cardinality of $\mathcal{B},$ then $\mathcal{B}$ is \textit{minimal}. Thus, all minimal blockers are minimum. $123$-forcing matrices and blockers of $123$-avoiding permutation matrices have been previously investigated in \cite{BC,BC2,BC1}.
%The number of $n\times n$ $123$-avoiding permutation matrices is the Catalan number
%\[C_n=\frac{{{2n}\choose n}}{n+1}.\]
%In fact, this is the same number for any of the six permutations of $\{1,2,3\}$, see e.g. \cite{Bona}.
%The ideas of forcing and blocking can be extended to other patterns $i_1i_2\cdots i_k$  \cite{BC2}.

Denote the $n\times n$ matrix of all $1'$s as $J_n$. We define the cyclic-Hankel decomposition of $J_n$ into $n$ permutation matrices by starting with row $1$ and cyclically permuting it as for circulant matrices but in a right-to-left fashion, rather than the left-to-right fashion. We call these permutation matrices cyclic-Hankel permutation matrices or cyclic-Hankel diagonals. The cyclic-Hankel decomposition is illustrated for $n=6$ using letters $a, b, c, d, e, f$ below:
\[\left[\begin{array}{c|c|c|c|c|c}
a&b&c&d&e&f\\ \hline
b&c&d&e&f&a\\ \hline
c&d&e&f&a&b\\ \hline
d&e&f&a&b&c\\ \hline
e&f&a&b&c&d\\ \hline
f&a&b&c&d&e\end{array}\right].\]

\begin{lemma} [Lemma 2.2 in \cite{BC}] The number of $0$'s in an $n\times n$ $123$-forcing $(0,1)$-matrix is at least $n$. An $n\times n$ $123$-forcing $(0,1)$-matrix with exactly n $0$'s contains exactly one $0$ from each cyclic-Hankel permutation matrix.
\end{lemma}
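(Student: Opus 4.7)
The plan is to show that every cyclic-Hankel permutation matrix corresponds to a $123$-avoiding permutation; then a counting argument using the disjointness of the cyclic-Hankel diagonals gives both claims at once.

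First I would describe the $k$th cyclic-Hankel permutation $\sigma_k$ explicitly. From the construction (right-to-left cyclic shifts of row $1$), the $1$ of the $k$th diagonal in row $i$ sits in column $k+1-i$, interpreted modulo $n$ with representatives in $\{1,\ldots,n\}$. Equivalently, $\sigma_k$ lists the values
\[k,\ k-1,\ \ldots,\ 2,\ 1,\ n,\ n-1,\ \ldots,\ k+1,\]
that is, a decreasing run from $k$ down to $1$ followed by a decreasing run from $n$ down to $k+1$.

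Next I would verify that such a permutation avoids a $123$-pattern. A $123$-pattern requires three entries in increasing order of both position and value. Three entries drawn entirely from one of the two decreasing runs cannot be increasing. Two entries from the first run form a decreasing pair, so they cannot be the first two terms of a $123$-pattern. One entry from the first run (value $\le k$) followed by two from the second run requires the two latter entries to be increasing, but the second run is decreasing. This exhausts all cases, so each $\sigma_k$ is $123$-avoiding and the corresponding cyclic-Hankel permutation matrix $P_k$ has no $123$-pattern.

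Finally I would assemble the two conclusions. Because $A$ is $123$-forcing, no $P_k$ satisfies $P_k \le A$, so for each $k$ there must be at least one position in which $P_k$ has a $1$ and $A$ has a $0$. Since the matrices $P_1,\ldots,P_n$ form a decomposition of $J_n$, the sets of positions supporting their $1$'s are pairwise disjoint. Summing over $k$ therefore yields at least $n$ zeros in $A$, proving the first claim. If $A$ has exactly $n$ zeros, then each $P_k$ must contribute exactly one of them, which gives the second claim. The only nontrivial step is the $123$-avoidance verification, and the two-decreasing-runs description makes it elementary; the rest is a pigeonhole argument on the disjoint Hankel diagonals.
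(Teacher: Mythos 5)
The paper does not prove this lemma; it is imported verbatim from Brualdi and Cao's paper \cite{BC}, so there is no in-paper proof to compare against. Your reconstruction is correct: the description of $\sigma_k$ as the concatenation of the decreasing runs $k,k-1,\ldots,1$ and $n,n-1,\ldots,k+1$ is accurate for the stated right-to-left cyclic shift, the case analysis showing each such permutation is $123$-avoiding is complete, and the pigeonhole step correctly uses the fact that the $n$ cyclic-Hankel diagonals are pairwise disjoint and partition $J_n$, so the zeros that ``break'' $P_k \le A$ are distinct for distinct $k$. This is the natural (and almost certainly the cited) argument, and it establishes both conclusions in one stroke.
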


One particular importance of all blockers with exact $n$ zeros is the $L$-shaped blockers, a set of $n$ adjacent positions denoted $L_n(s,r)$, where $s$ corresponds to the width of the blocker, $r$ corresponds to the height, and $r+s=n+1$. Clearly, an $n\times n$ $(0,1)$-matrix $A$ with a row or column of all $0$'s is $123$-forcing, since there do not exist any permutation matrices $P\le A$. If the zeros are not contained in one row or one column,  $L$-shaped blockers must contain either the $(1,n)$ or $(n,1)$ position. In \cite{BC}, it was shown that all blockers with exact $n$ zeros can be obtained from  $L$-shaped blockers by shifting some zeros along the cyclic-Hankel diagonals. The following examples illustrate these results.

 \begin{example}{\rm \label{ex:123avoid} For $n=6$, one possible $L$-shaped blocker is
 \[L_6(4,3)=\left[\begin{array}{c|c|c|c|c|c}
 1&1&0&0&0&0\\ \hline
  1&1&1&1&1&0\\ \hline
  1 &1&1&1&1&0\\ \hline
    1&1&1&1&1&1\\ \hline
    1 &1&1&1&1&1\\ \hline
     1 &1&1&1&1&1\end{array}\right].\]

Every permutation matrix $P\le L_6(4,3)$ contains one of the two $1$'s from row $1$, one of the three $1$'s from column $6$, and then necessarily one of the $1$'s from the $2\times 3$ submatrix formed by rows $2$ and $3$, and columns $3, 4$, and $5$, as given by the Frobenius-K\"{o}nig theorem \cite{FK1933}, thereby resulting in a $123$-pattern. Thus $A$ is a $123$-forcing matrix; equivalently, $A$ blocks all $6\times 6$ $123$-avoiding permutation matrices. Another example of a ${123}$-forcing matrix with 6\ $0$'s obtained from $L_6(4,3)$ by shifting some zeros along the cyclic-Hankel diagonals is
\[\left[\begin{array}{c|c|c|c|c|c}
1&1&1&0&0&0\\ \hline
1&0&1&1&1&0 \\ \hline
1&1&1&1&1&1 \\ \hline
1&1&1&1&0&1 \\ \hline
1&1&1&1&1&1 \\ \hline
1&1&1&1&1&1 \end{array}\right].\]
%and the set of six  $0$'s of   $A$ (or their positions) is a $\overline{123}$-blocker.
 }\hfill{$\cvd$}
 \end{example}

 The main purpose of this paper is to characterize $123$-forcing matrices (equivalently, blockers of $123$-avoiding permutation matrices) $A$ with $0$'s in certain shapes. We now briefly summarize the content of this paper. In Section 2, we define flag-shaped blockers of $123$-avoiding permutation matrices and show that they are minimum blockers. In Section 3, we give all possible cardinalities (number of zeros) of an $n\times n$ flag-shaped blocker of $123$-avoiding permutation matrices and propose a conjecture that any $n\times n$ minimum blocker of $123$-avoiding permutation matrices contains at least $n$ zeros and at most $rs$ zeros, where $|r-s|\leq 1$ and $r+s=n+1.$ In Section 4, we explore the dimensions of the subpolytopes of a $123$-avoiding polytope determined by a flag-shaped blocker.

\section{Defining Flag-Shaped Blockers}\label{Section 2}

A flag-shaped blocker of an $n \times n$ matrix is a set of adjacent positions, denoted $B_n(m,t)$, in the following form.

\begin{center}
\begin{tikzpicture}
    \draw[step=5mm,gray,very thin] (0,0) grid (5,5);
    \filldraw[fill=green!20!white, draw=black, thick] (3.5,5) -- (3,5) -- (3,1.5) -- (3.5,1.5) -- (3.5,5);
    \filldraw[fill=red!20!white, draw=black, thick] (3,5) -- (1.5,5) -- (1.5,3) -- (3,3) -- (3,5);
    \node at (0.25,-0.25) {$1$};
    \node at (0.75,-0.25) {$2$};
    \node at (2,-0.25) {$\hdots$};
    \node at (3.25,-0.25) {$m$};
    \node at (4,-0.25) {$\hdots$};
    \node at (4.75,-0.25) {$n$};
    \draw[<->] (5.25,0) -- (5.25, 1.5) node[scale=1, midway, right]{$t$};
\end{tikzpicture}
\end{center}

The blocker is composed of two parts, the "pole," shown in green, and the "flag," shown in red. The "pole" is a $(n-t)\times 1$ submatrix contained in the $m^{th}$ column, where $1\leq m\leq n.$ The "flag" is an $(n-m+1)\times t$ submatrix with consecutive rows and columns contained in row $1$ to row $n-m+1$ and column $m-t$ to column $m-1,$  where $t$ is the number of unoccupied rows below the blocker. We require the height of the pole to be greater or equal to the height of the flag, hence $0 \leq t \leq m-1$. Note that $B_n(m,t)$ has cardinality $n+t(n-m).$

Since the $123$-avoiding property is preserved when we take the transpose, take the Hankel transpose, or rotate the matrix by $180^{\circ},$ a flag-shaped blocker is still a blocker if it is reflected across the main or Hankel diagonal or rotated $180^{\circ}$ about the center of the matrix. Thus, without loss of generality, we will only consider the case where flag-shaped blockers are oriented in the same manner as the figure above, unless stated otherwise.

Interestingly, these flag-shaped blockers can be considered generalizations of $L$-shaped blockers, which are introduced in \cite{BC2}, as an $L$-shaped blocker with $m=n$ is a flag-shaped blocker. Furthermore, the flag-shaped blockers $B_n(m,m-1)$ are special types of blockers given by the Frobenius-K\"{o}nig theorem. Thus, $L$-shaped blockers and certain cases of Frobenius-K\"{o}nig blockers are both special cases of flag-shaped blockers.

Starting with Lemma \ref{Lemma2.1}, we first prove that $B_n(m,t),$ the set of positions arranged in a flag-shape, are blockers, then we show they are minimum blockers in Lemma \ref{minimumR1} and Theorem \ref{Theorem2.3}. In the following proofs and examples, we will utilize the cyclic-Hankel decomposition with red positions denoting blocker positions, unless specified otherwise.

\begin{lemma}\label{Lemma2.1} Let $Q$ be a set of positions of an $n \times n$ matrix. If $Q$ is a flag-shaped blocker $B_n(m,t)$ with cardinality $n+t(n-m)$, where $0 \leq t \leq m-1$ and $1 \leq m \leq n$, then $Q$ is a blocker of all $n \times n$ $123$-avoiding permutation matrices.
\end{lemma}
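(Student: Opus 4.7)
My plan is to argue by contradiction: assume a 123-avoiding permutation matrix $P \leq A$ exists, where $A$ is the complement of $B_n(m,t)$, and derive an impossibility. First I would translate the pole and flag into positional constraints on the permutation $\pi$ corresponding to $P$. For every row $i$ with $1 \leq i \leq n-m+1$, the flag forbids columns $m-t,\ldots,m-1$ and (since $t \leq m-1$ gives $n-t \geq n-m+1$) the pole forbids column $m$, so $\pi(i) \in \{1,\ldots,m-t-1\} \cup \{m+1,\ldots,n\}$. Dually, $\pi^{-1}(m) \geq n-t+1$ from the pole, and $\pi^{-1}(v) \geq n-m+2$ for each $v \in \{m-t,\ldots,m-1\}$ from the flag.

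The crux is to rule out $\pi(i_0) \leq m-t-1$ for any $i_0 \leq n-m+1$. Supposing such an $i_0$ exists, let $c_0 = \pi(i_0)$. Applying 123-avoidance to the triple rooted at $i_0$, for any $i_0 < j < k$ with $\pi(j), \pi(k) > c_0$ we must have $\pi(j) > \pi(k)$, since otherwise $(i_0,j,k)$ would be a 123-pattern. I would then apply this to the $t+1$ values $m-t, m-t+1, \ldots, m$, each of which exceeds $c_0$ and occupies a position exceeding $i_0$ (positions $\geq n-m+2 > i_0$ by the flag, and $\geq n-t+1 > i_0$ by the pole). Their positions must therefore strictly increase as the values decrease, giving
\[
\pi^{-1}(m) < \pi^{-1}(m-1) < \cdots < \pi^{-1}(m-t).
\]
Combined with $\pi^{-1}(m) \geq n-t+1$, this forces $\pi^{-1}(m-t) \geq n+1$, an impossibility. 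Hence each $\pi(i)$ with $i \leq n-m+1$ lies in $\{m+1,\ldots,n\}$; but that set has only $n-m$ elements, contradicting the distinctness of the $n-m+1$ values $\pi(1),\ldots,\pi(n-m+1)$.

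I expect the main obstacle to be isolating the correct pigeonhole in the middle step: one must combine 123-avoidance with the pole bound on $\pi^{-1}(m)$ and with the flag bounds on $\pi^{-1}(m-1),\ldots,\pi^{-1}(m-t)$ simultaneously, squeezing $t+1$ prescribed values into only $t$ admissible rows. Once that pigeonhole is in hand, a second pigeonhole on the $n-m+1$ top-row values closes the argument with no induction and no case analysis on the structure of $\pi$.
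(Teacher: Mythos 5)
Your proof is correct and takes a genuinely different route from the paper's. The paper argues pictorially: it uses the Frobenius--K\"{o}nig theorem on the $(n-t)\times(t+1)$ rectangle formed by the flag, pole, and the cells directly below the flag to conclude that any permutation avoiding $Q$ must hit the ``middle'' green region; it uses a pigeonhole on the first $n-m+1$ rows to conclude the permutation must hit the upper-left region; and it notes the pole forces a hit in the lower part of column $m$. The three resulting positions lie in strictly increasing rows and columns, giving an explicit $123$-pattern. Your argument dispenses with Frobenius--K\"{o}nig entirely. Instead of exhibiting a single $123$-pattern from three regions, you fix the hypothetical upper-left entry $(i_0,c_0)$ and observe that $123$-avoidance forces the positions of \emph{all} $t+1$ values $m-t,\dots,m$ (each larger than $c_0$ and each lying below row $i_0$) to be strictly decreasing in value as the row index increases. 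Chaining this cascade with the pole bound $\pi^{-1}(m)\ge n-t+1$ drives $\pi^{-1}(m-t)$ to at least $n+1$, which is impossible; the pigeonhole on the upper-right block then finishes. The two proofs share the pigeonhole on rows $1,\dots,n-m+1$, but where the paper locates one intermediate witness via Frobenius--K\"{o}nig, you replace that step with a tighter, purely order-theoretic cascade applied $t+1$ times. Your version is more elementary (no Frobenius--K\"{o}nig needed) and stated cleanly in permutation language; the paper's version is more visual and makes the blocking $123$-pattern explicit.
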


\begin{proof}
Without loss of generality, we consider a $10 \times 10$ matrix with the flag-shaped blocker $B_n(7,3)$ for illustrative purposes, though the general proof follows in the same manner.

%First, we must show that $Q$ is a blocker, then we will show it is minimum. 
We know that if there exists a $123$-avoiding permutation matrix that does not intersect $Q$, then it must contain one of the positions in yellow submatrix in the below matrix.

\[\left[\begin{array}{c|c|c|c|c|c|c|c|c|c}
a&b&c&\cellcolor{red!20}d&\cellcolor{red!20}e&\cellcolor{red!20}f&\cellcolor{red!20}g&h&i&j\\ \hline
b&c&d&\cellcolor{red!20}e&\cellcolor{red!20}f&\cellcolor{red!20}g&\cellcolor{red!20}h&i&j&a\\ \hline
c&d&e&\cellcolor{red!20}f&\cellcolor{red!20}g&\cellcolor{red!20}h&\cellcolor{red!20}i&j&a&b\\ \hline
d&e&f&\cellcolor{red!20}g&\cellcolor{red!20}h&\cellcolor{red!20}i&\cellcolor{red!20}j&a&b&c\\ \hline
e&f&g&\cellcolor{green!20}h&\cellcolor{green!20}i&\cellcolor{green!20}j&\cellcolor{red!20}a&b&c&d\\ \hline
f&g&h&\cellcolor{green!20}i&\cellcolor{green!20}j&\cellcolor{green!20}a&\cellcolor{red!20}b&c&d&e\\ \hline
g&h&i&\cellcolor{green!20}j&\cellcolor{green!20}a&\cellcolor{green!20}b&\cellcolor{red!20}c&d&e&f\\ \hline
h&i&j&a&b&c&\cellcolor{yellow!20}d&e&f&g\\ \hline
i&j&a&b&c&d&\cellcolor{yellow!20}e&f&g&h\\ \hline
j&a&b&c&d&e&\cellcolor{yellow!20}f&g&h&i
\end{array}\right]\]

We also know that at least one of the green positions in the above figure must be used to construct the permutation matrix, as the set of green and red positions forms a blocker given by the Frobenius-K\"{o}nig theorem, which intersects every permutation matrix.

The yellow submatrix in the upper right corner shown in the figure below will always contain more rows, $n-m+1$, than columns, $n-m$. Thus, it is necessary that we use at least one position from the green submatrix in the upper left corner in the figure below to construct the permutation matrix.

\[\left[\begin{array}{c|c|c|c|c|c|c|c|c|c}
\cellcolor{green!20}a&\cellcolor{green!20}b&\cellcolor{green!20}c&\cellcolor{red!20}d&\cellcolor{red!20}e&\cellcolor{red!20}f&\cellcolor{red!20}g&\cellcolor{yellow!20}h&\cellcolor{yellow!20}i&\cellcolor{yellow!20}j\\ \hline
\cellcolor{green!20}b&\cellcolor{green!20}c&\cellcolor{green!20}d&\cellcolor{red!20}e&\cellcolor{red!20}f&\cellcolor{red!20}g&\cellcolor{red!20}h&\cellcolor{yellow!20}i&\cellcolor{yellow!20}j&\cellcolor{yellow!20}a\\ \hline
\cellcolor{green!20}c&\cellcolor{green!20}d&\cellcolor{green!20}e&\cellcolor{red!20}f&\cellcolor{red!20}g&\cellcolor{red!20}h&\cellcolor{red!20}i&\cellcolor{yellow!20}j&\cellcolor{yellow!20}a&\cellcolor{yellow!20}b\\ \hline
\cellcolor{green!20}d&\cellcolor{green!20}e&\cellcolor{green!20}f&\cellcolor{red!20}g&\cellcolor{red!20}h&\cellcolor{red!20}i&\cellcolor{red!20}j&\cellcolor{yellow!20}a&\cellcolor{yellow!20}b&\cellcolor{yellow!20}c\\ \hline
e&f&g&h&i&j&\cellcolor{red!20}a&b&c&d\\ \hline
f&g&h&i&j&a&\cellcolor{red!20}b&c&d&e\\ \hline
g&h&i&j&a&b&\cellcolor{red!20}c&d&e&f\\ \hline
h&i&j&a&b&c&d&e&f&g\\ \hline
i&j&a&b&c&d&e&f&g&h\\ \hline
j&a&b&c&d&e&f&g&h&i
\end{array}\right]\]

However, this means the permutation matrix constructed cannot be $123$-avoiding without intersecting the blocker, as at least one element from each of the yellow submatrices below must be utilized when constructing a permutation matrix that does not intersect $Q$. Thus, $Q$ is a blocker.

\[\left[\begin{array}{c|c|c|c|c|c|c|c|c|c}
\cellcolor{yellow!20}a&\cellcolor{yellow!20}b&\cellcolor{yellow!20}c&\cellcolor{red!20}d&\cellcolor{red!20}e&\cellcolor{red!20}f&\cellcolor{red!20}g&h&i&j\\ \hline
\cellcolor{yellow!20}b&\cellcolor{yellow!20}c&\cellcolor{yellow!20}d&\cellcolor{red!20}e&\cellcolor{red!20}f&\cellcolor{red!20}g&\cellcolor{red!20}h&i&j&a\\ \hline
\cellcolor{yellow!20}c&\cellcolor{yellow!20}d&\cellcolor{yellow!20}e&\cellcolor{red!20}f&\cellcolor{red!20}g&\cellcolor{red!20}h&\cellcolor{red!20}i&j&a&b\\ \hline
\cellcolor{yellow!20}d&\cellcolor{yellow!20}e&\cellcolor{yellow!20}f&\cellcolor{red!20}g&\cellcolor{red!20}h&\cellcolor{red!20}i&\cellcolor{red!20}j&a&b&c\\ \hline
e&f&g&\cellcolor{yellow!20}h&\cellcolor{yellow!20}i&\cellcolor{yellow!20}j&\cellcolor{red!20}a&b&c&d\\ \hline
f&g&h&\cellcolor{yellow!20}i&\cellcolor{yellow!20}j&\cellcolor{yellow!20}a&\cellcolor{red!20}b&c&d&e\\ \hline
g&h&i&\cellcolor{yellow!20}j&\cellcolor{yellow!20}a&\cellcolor{yellow!20}b&\cellcolor{red!20}c&d&e&f\\ \hline
h&i&j&a&b&c&\cellcolor{yellow!20}d&e&f&g\\ \hline
i&j&a&b&c&d&\cellcolor{yellow!20}e&f&g&h\\ \hline
j&a&b&c&d&e&\cellcolor{yellow!20}f&g&h&i
\end{array}\right]\]
\end{proof}

We now show that flag-shaped blockers $B_n(m,t)$ are minimum blockers.

\begin{lemma} \label{minimumR1} For any element $b$ in the first row of a flag-shaped blocker $B_n(m,t)$ of all $n \times n$ $123$-avoiding permutation matrices, there exists at least one $123$-avoiding permutation matrix that intersects the blocker at most once at $b$.
\end{lemma}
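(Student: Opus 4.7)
The plan is to exhibit, for each $b = (1, j)$ with $j \in [m-t, m]$ in the first row of $B_n(m,t)$, an explicit $123$-avoiding permutation $\sigma$ whose matrix meets $B_n(m,t)$ only at $b$. Since $B_n(m,t)$ is already known to be a blocker by Lemma \ref{Lemma2.1}, any such $\sigma$ meets it at least once, so ``at most once at $b$'' is equivalent to ``exactly once at $b$.''

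In every case I would begin by setting $\sigma(1) = j$ and $\sigma(i) = n - i + 2$ for $2 \le i \le n - m + 1$, which places $1$'s at $(1, j)$ and at $(i, n-i+2)$ for $i = 2, \ldots, n-m+1$. The latter cells lie strictly to the right of column $m$, so those rows contribute no blocker entry, and the columns used there are strictly decreasing, so no $123$-pattern is introduced yet.

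I would then split into the pole case $j = m$ and the flag case $m-t \le j \le m-1$. In the pole case, the remaining rows $n-m+2, \ldots, n$ can simply be filled with the columns $m-1, m-2, \ldots, 1$ in decreasing order; these cells lie below the flag and column $m$ has already been consumed, so no further blocker cell is hit. In the flag case the pole still forbids column $m$ in rows $n-m+2, \ldots, n-t$, so I would fill those rows with $j-1, j-2, \ldots, j-m+t+1$ in decreasing order (the hypothesis $j \ge m-t$ keeps these values positive), set $\sigma(n-t+1) = m$ (the first row below the pole), and finally distribute the leftover columns $\{1, \ldots, j - m + t\} \cup \{j+1, \ldots, m-1\}$ in decreasing order across rows $n-t+2, \ldots, n$.

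The key verification is that $\sigma$ is $123$-avoiding, which I would establish by exhibiting a decomposition into two decreasing subsequences: the entries of value $\le j$ (which, read in position order, are $j, j-1, \ldots, 1$) and the entries of value $> j$ (which, in position order, are $n, n-1, \ldots, j+1$). The main obstacle is the bookkeeping in the flag case: checking that each subsequence really is position-increasing reduces to the inequalities $n-t+1 > n-m+1$ (i.e.\ $t \le m-1$) and $j - m + t + 1 \ge 1$ (i.e.\ $j \ge m-t$), both guaranteed by hypothesis. The blocker-count check then reduces to the observation that rows $\ge n-m+2$ lie below the flag, so the only way such a row could hit the blocker is via column $m$; by construction column $m$ is used exactly once in $\sigma$, at $b$ when $j = m$ and at $(n-t+1, m)$ (just below the pole) when $j < m$, so no unintended blocker cell is hit.
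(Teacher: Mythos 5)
Your proposal is correct, and it proves the lemma by a genuinely different explicit construction than the paper's. The paper builds the permutation in three blocks read by row: it first fills rows $2,\ldots,m-t$ with the small columns $m-t-1,\ldots,1$ (an anti-diagonal run), then rows $m-t+1,\ldots,n-t$ with the large columns $n,\ldots,m+1$, and finally places the remaining middle columns $\{m-t,\ldots,m\}\setminus\{j\}$ in rows $n-t+1,\ldots,n$ with the chosen $j$ left for row $1$; its $123$-avoidance is then the (implicitly used) observation that positions $1,\ldots,m-t$ and positions $m-t+1,\ldots,n$ each form a decreasing run. You instead set $\sigma(i)=n-i+2$ for rows $2,\ldots,n-m+1$, which moves the large-column block to the top, and you then fill the remaining rows so that the \emph{values} $\le j$ and the \emph{values} $>j$ each form a decreasing subsequence. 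The split-by-value decomposition is the cleaner way to certify $123$-avoidance and makes your argument more rigorous than the paper's somewhat pictorial sketch; the paper's split-by-position decomposition, while left implicit, is arguably easier to ``see'' in the cyclic-Hankel diagrams. A nice feature of your construction is that the blocker-count check becomes essentially automatic once you note that rows $\ge n-m+2$ lie below the flag and that column $m$ is consumed exactly once, either at $(1,m)$ in the pole case or at $(n-t+1,m)$, which is just below the pole, in the flag case. Both approaches reach the same conclusion; yours trades the paper's visual block layout for a sharper invariant (the two-chain cover by value threshold $j$) that makes the verification unambiguous.
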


\begin{proof} Without loss of generality, we consider a $10 \times 10$ matrix with the flag-shaped blocker $B_n(7,3)$ for illustrative purposes, though the general proof follows in the same manner.

Recall that if a blocker is minimum, then removing any position will no longer make it a blocker. To prove that all the blocker positions in the first row are necessary, we need to show that a $123$-avoiding permutation matrix can be constructed using any of the positions of the blocker in the first row.

%This can be achieved by taking 3 positions from the $4 \times 3$ submatrix at the upper left corner that do not form a $12$-pattern, the three yellow $d$'s in the matrix below work for all the permutation matrices obtained if any of the blocker elements in row 1 were removed. We can then use 6 positions from the $6 \times 7$ submatrix in the lower right corner, going from northeast to southwest, to complete the permutation matrices. We can again use the three yellow $d$'s to avoid a $12$-pattern within the submatrix.

We start by considering the first $m-t-1$ columns of the matrix, the first three columns in the matrix below. We can begin constructing the $123$-avoiding permutation matrix that only intersects the blocker in row 1 by utilizing the highest positions, going from northeast to southwest and starting at row 2. These are the three green positions in the figure below.

\[\left[\begin{array}{c|c|c||c|c|c|c|c|c|c}
a&b&c&\cellcolor{red!20}d&\cellcolor{red!20}e&\cellcolor{red!20}f&\cellcolor{red!20}g&h&i&j\\ \hline
b&c&\cellcolor{green!20}d&\cellcolor{red!20}e&\cellcolor{red!20}f&\cellcolor{red!20}g&\cellcolor{red!20}h&i&j&a\\ \hline
c&\cellcolor{green!20}d&e&\cellcolor{red!20}f&\cellcolor{red!20}g&\cellcolor{red!20}h&\cellcolor{red!20}i&j&a&b\\ \hline
\cellcolor{green!20}d&e&f&\cellcolor{red!20}g&\cellcolor{red!20}h&\cellcolor{red!20}i&\cellcolor{red!20}j&a&b&c\\ \hline
e&f&g&h&i&j&\cellcolor{red!20}a&b&c&d\\ \hline
f&g&h&i&j&a&\cellcolor{red!20}b&c&d&e\\ \hline
g&h&i&j&a&b&\cellcolor{red!20}c&d&e&f\\ \hline
h&i&j&a&b&c&d&e&f&g\\ \hline
i&j&a&b&c&d&e&f&g&h\\ \hline
j&a&b&c&d&e&f&g&h&i
\end{array}\right]\]

Next, we will consider the last $n-m$ columns. We can use positions from these columns to construct the $123$-avoiding permutation matrix, starting with the most northeastern position in the row below the lowest position used in the previous step.

\[\left[\begin{array}{c|c|c|c|c|c|c||c|c|c}
a&b&c&\cellcolor{red!20}d&\cellcolor{red!20}e&\cellcolor{red!20}f&\cellcolor{red!20}g&h&i&j\\ \hline
b&c&\cellcolor{green!20}d&\cellcolor{red!20}e&\cellcolor{red!20}f&\cellcolor{red!20}g&\cellcolor{red!20}h&i&j&a\\ \hline
c&\cellcolor{green!20}d&e&\cellcolor{red!20}f&\cellcolor{red!20}g&\cellcolor{red!20}h&\cellcolor{red!20}i&j&a&b\\ \hline
\cellcolor{green!20}d&e&f&\cellcolor{red!20}g&\cellcolor{red!20}h&\cellcolor{red!20}i&\cellcolor{red!20}j&a&b&c\\ \hline
e&f&g&h&i&j&\cellcolor{red!20}a&b&c&\cellcolor{green!20}d\\ \hline
f&g&h&i&j&a&\cellcolor{red!20}b&c&\cellcolor{green!20}d&e\\ \hline
g&h&i&j&a&b&\cellcolor{red!20}c&\cellcolor{green!20}d&e&f\\ \hline
h&i&j&a&b&c&d&e&f&g\\ \hline
i&j&a&b&c&d&e&f&g&h\\ \hline
j&a&b&c&d&e&f&g&h&i
\end{array}\right]\]

Exactly $n-t-1$ out of $n$ columns have been accounted for, and elements in these columns that we use to construct the $123$-avoiding permutation matrix will never intersect the blocker since there are no positions of the blocker in these columns. This implies that there are exactly $n-t-1$ rows in the matrix that pose no issues when constructing the permutation matrix.

Now, all that is left to consider is row 1 and the last $t$ rows of the matrix, for a total of $t+1$ rows. By definition, the last $t$ rows contain no positions of the blocker, so it is always possible to use $t$ positions from northeast to southwest for the portion of the $123$-avoiding permutation matrix in these last $t$ rows.

So far, the $123$-avoiding permutation matrix being constructed is disjoint from the blocker. However, the last element of the permutation matrix must come from row 1. This element will intersect the blocker, so there exists a permutation matrix that intersects the blocker at most once, as shown in the following examples, where the yellow positions represent the intersections of the blocker and the $123$-avoiding permutation matrix.

\[\left[\begin{array}{c|c|c|c|c|c|c|c|c|c}
a&b&c&\cellcolor{red!20}d&\cellcolor{red!20}e&\cellcolor{red!20}f&\cellcolor{yellow!20}g&h&i&j\\ \hline
b&c&\cellcolor{green!20}d&\cellcolor{red!20}e&\cellcolor{red!20}f&\cellcolor{red!20}g&\cellcolor{red!20}h&i&j&a\\ \hline
c&\cellcolor{green!20}d&e&\cellcolor{red!20}f&\cellcolor{red!20}g&\cellcolor{red!20}h&\cellcolor{red!20}i&j&a&b\\ \hline
\cellcolor{green!20}d&e&f&\cellcolor{red!20}g&\cellcolor{red!20}h&\cellcolor{red!20}i&\cellcolor{red!20}j&a&b&c\\ \hline
e&f&g&h&i&j&\cellcolor{red!20}a&b&c&\cellcolor{green!20}d\\ \hline
f&g&h&i&j&a&\cellcolor{red!20}b&c&\cellcolor{green!20}d&e\\ \hline
g&h&i&j&a&b&\cellcolor{red!20}c&\cellcolor{green!20}d&e&f\\ \hline
h&i&j&a&b&\cellcolor{green!20}c&d&e&f&g\\ \hline
i&j&a&b&\cellcolor{green!20}c&d&e&f&g&h\\ \hline
j&a&b&\cellcolor{green!20}c&d&e&f&g&h&i
\end{array}\right] \ \ or \ \
\left[\begin{array}{c|c|c|c|c|c|c|c|c|c}
a&b&c&\cellcolor{red!20}d&\cellcolor{yellow!20}e&\cellcolor{red!20}f&\cellcolor{red!20}g&h&i&j\\ \hline
b&c&\cellcolor{green!20}d&\cellcolor{red!20}e&\cellcolor{red!20}f&\cellcolor{red!20}g&\cellcolor{red!20}h&i&j&a\\ \hline
c&\cellcolor{green!20}d&e&\cellcolor{red!20}f&\cellcolor{red!20}g&\cellcolor{red!20}h&\cellcolor{red!20}i&j&a&b\\ \hline
\cellcolor{green!20}d&e&f&\cellcolor{red!20}g&\cellcolor{red!20}h&\cellcolor{red!20}i&\cellcolor{red!20}j&a&b&c\\ \hline
e&f&g&h&i&j&\cellcolor{red!20}a&b&c&\cellcolor{green!20}d\\ \hline
f&g&h&i&j&a&\cellcolor{red!20}b&c&\cellcolor{green!20}d&e\\ \hline
g&h&i&j&a&b&\cellcolor{red!20}c&\cellcolor{green!20}d&e&f\\ \hline
h&i&j&a&b&c&\cellcolor{green!20}d&e&f&g\\ \hline
i&j&a&b&c&\cellcolor{green!20}d&e&f&g&h\\ \hline
j&a&b&\cellcolor{green!20}c&d&e&f&g&h&i
\end{array}\right]\]

Thus, for any element $b$ in the flag-shaped blocker that resides in the first row of an $n \times n$ matrix, there exists at least one $123$-avoiding permutation matrix that intersects the blocker at most once at $b$. By definition, this means that every element of the blocker in the first row of the matrix is a necessary position of the blocker. \end{proof}

\begin{theorem}\label{Theorem2.3}
Let $Q$ be a set of positions of an $n \times n$ matrix. If $Q$ is a flag-shaped blocker $B_n(m,t)$ with cardinality $n+t(n-m)$, where $0 \leq t \leq m-1$ and $1 \leq m \leq n$, then $Q$ is a minimum blocker of all $n \times n$ $123$-avoiding permutation matrices.
\end{theorem}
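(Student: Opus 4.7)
By Lemma~\ref{Lemma2.1}, $B_n(m,t)$ is a blocker. To show it is minimum, I must verify that every cell of $B_n(m,t)$ is necessary: for each $p\in B_n(m,t)$, there is a $123$-avoiding permutation matrix whose unique intersection with $B_n(m,t)$ is $p$. Lemma~\ref{minimumR1} already handles every $p$ in the first row of the blocker, so the remaining positions split into Case A, the flag cells $(i_0,j_0)$ with $2\le i_0\le n-m+1$ and $m-t\le j_0\le m-1$, and Case B, the pole cells $(i_0,m)$ with $2\le i_0\le n-t$.

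For each such $p$ I would write down an explicit permutation $\sigma$ with $\sigma(j_0)=i_0$ (taking $j_0=m$ in Case B) and prove it is $123$-avoiding by partitioning the graph of $\sigma$ into two strictly decreasing subsequences, which suffices by the Erd\H{o}s--Szekeres / Dilworth dichotomy. In Case B I would place one chain on the positions $m-t,m-t+1,\ldots,m$ carrying the values $n,n-1,\ldots,n-t+1,i_0$ (strictly decreasing because $i_0\le n-t$), and the other chain on the remaining positions $\{1,\ldots,m-t-1\}\cup\{m+1,\ldots,n\}$ carrying $\{1,\ldots,n-t\}\setminus\{i_0\}$ in decreasing order. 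The flag columns then automatically receive values $\ge n-t+1\ge n-m+2$, missing the flag, while column $m$ hits the pole exactly at $(i_0,m)$.

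In Case A I would take the first chain on the positions $1,2,\ldots,m-t-1,j_0$ with decreasing values $n-t,n-t-1,\ldots,n-m+2,i_0$, where the final inequality $n-m+2>i_0$ uses $i_0\le n-m+1$. The second chain occupies $\bigl(\{m-t,\ldots,m\}\setminus\{j_0\}\bigr)\cup\{m+1,\ldots,n\}$, split as a high block of length $t$ on the positions $\le m$ with values $n,n-1,\ldots,n-t+1$ and a low block of length $n-m$ on the positions $>m$ with $\{1,\ldots,n-m+1\}\setminus\{i_0\}$ in decreasing order. The transition $n-t+1>n-m+1$, which holds because $t\le m-1$, keeps that chain strictly decreasing, and having every flag column other than $j_0$ and the pole column receive a value $\ge n-t+1\ge n-m+2$ ensures that no other blocker cell is hit.

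The main obstacle I expect is the combinatorial case-checking in the degenerate regimes $t=0$ or $t=m-1$ and in the boundary cases $j_0\in\{m-t,m-1\}$, where one of the chains collapses or the value range from which it is drawn becomes empty. In each such subcase one needs to verify by inspection that the prescription above still yields a legitimate $123$-avoiding permutation matrix meeting the blocker only at $p$. A partial reduction via the transpose, Hankel-transpose, and $180^{\circ}$-rotation symmetries of $123$-avoidance can handle some blocker cells (e.g.\ the leftmost flag column can be reduced to Lemma~\ref{minimumR1} applied to the transposed blocker), but these symmetries do not cover all interior cells, so the explicit construction above seems unavoidable.
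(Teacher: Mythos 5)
Your proposal is correct, and it takes a genuinely different route from the paper. The paper's proof is a recursive reduction: it invokes Lemma~\ref{minimumR1} for row~$1$, then places a $1$ at $(1,n)$ (which is outside the blocker whenever $m<n$), deletes that row and column, observes that the surviving blocker is the smaller flag-shaped blocker $B_{n-1}(m,t)$, and iterates until $m$ equals the (shrinking) matrix size, at which point the blocker is $L$-shaped and the known minimality of $L$-shaped blockers furnishes the base case. Your approach instead builds, for each blocker cell $(i_0,j_0)$ not in row~$1$, an explicit $123$-avoiding permutation matrix meeting the blocker only there, by exhibiting the permutation as a union of two strictly decreasing chains and checking that every flag column (and the pole column) other than $j_0$ receives a value $\geq n-t+1 \geq n-m+2$, hence misses the flag (rows $1,\ldots,n-m+1$) and the pole (rows $1,\ldots,n-t$). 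I verified your value and position sets are exact partitions of $\{1,\ldots,n\}$ in both Case~A and Case~B, the inequalities $n-m+2 > i_0$ and $n-t+1 > n-m+1$ hold under the given constraints, and the alleged degenerate cases $t=0$, $t=m-1$, $j_0\in\{m-t,m-1\}$, $m\in\{1,n\}$ all collapse gracefully (a chain merely becomes empty or a singleton, with no harm). The trade-off: the paper's induction is shorter on the page but leaves the inductive step somewhat informal and needs the $L$-shaped base case as external input; your construction is uniform, self-contained, and certifies minimality cell by cell, at the cost of the bookkeeping you rightly flag. Either is a complete proof once written out carefully.
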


\begin{proof}
Without loss of generality, we consider a $10 \times 10$ matrix with the flag-shaped blocker $B_n(7,3)$ for illustrative purposes, though the general proof follows in the same manner.

%Now, we must show that $Q$ is a minimum blocker. 
By Lemma~\ref{minimumR1}, every element of $Q$ in the first row of the matrix is necessary for the minimum blocker, so we can consider the following matrix.

\[\left[\begin{array}{c|c|c|c|c|c|c|c|c||c}
a&b&c&\cellcolor{red!20}d&\cellcolor{red!20}e&\cellcolor{red!20}f&\cellcolor{red!20}g&h&i&\cellcolor{green!20}j\\ \hline \hline
b&c&d&\cellcolor{red!20}e&\cellcolor{red!20}f&\cellcolor{red!20}g&\cellcolor{red!20}h&i&j&a\\ \hline
c&d&e&\cellcolor{red!20}f&\cellcolor{red!20}g&\cellcolor{red!20}h&\cellcolor{red!20}i&j&a&b\\ \hline
d&e&f&\cellcolor{red!20}g&\cellcolor{red!20}h&\cellcolor{red!20}i&\cellcolor{red!20}j&a&b&c\\ \hline
e&f&g&h&i&j&\cellcolor{red!20}a&b&c&d\\ \hline
f&g&h&i&j&a&\cellcolor{red!20}b&c&d&e\\ \hline
g&h&i&j&a&b&\cellcolor{red!20}c&d&e&f\\ \hline
h&i&j&a&b&c&d&e&f&g\\ \hline
i&j&a&b&c&d&e&f&g&h\\ \hline
j&a&b&c&d&e&f&g&h&i
\end{array}\right]\]

We can use the green $j$ to attempt to construct a $123$-avoiding permutation matrix, allowing us to focus on the $9 \times 9$ submatrix at the lower left corner. In this submatrix, we have another flag-shaped blocker. By repeating the above steps, we can show that all the blocker elements in row 2 must be included in the minimum blocker. This process of reducing the size of the flag-shaped blocker can be repeated until we are left with an $L$-shaped minimal blocker, all positions of which are necessary. Thus, all positions of $Q$ are necessary. Since removing any element of $Q$ makes it no longer a blocker, $Q$ is a minimum blocker.
\end{proof}

\section{The Cardinality of Flag-Shaped Blockers}

Flag-shaped blockers have a cardinality of $n$ only when $m=n$ or $m=1$. More generally, the cardinality of a flag-shaped minimum blocker is given by the expression $n+t(n-m)$, where $0 \leq t \leq m-1$ and $1 \leq m \leq n$. With this expression, we are able to determine which cardinalities can and cannot be achieved by a flag-shaped blocker.

\begin{theorem}\label{Theorem 4}
    Let $n,m,$ and $t$ be nonnegative integers, such that $0 \leq t \leq m-1$ and $1 \leq m \leq n$, and let $p:=n+t(n-m)$, where $n \leq p \leq n+(\lceil \frac{n}{2} \rceil -1)(n - \lceil \frac{n}{2} \rceil)$. There exists a flag-shaped blocker $B_n(m,t)$ with cardinality $p$ if and only if $p-n \leq m-1$ or if $p-n$ is a composite number.
\end{theorem}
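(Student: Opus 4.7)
The plan is to recast the statement as a problem about factorizations of the integer $k := p - n$. Since $B_n(m,t)$ has cardinality $n + t(n-m)$, setting $s := n - m$ I want to characterize which values of $k$ in the range $[0,(\lceil n/2 \rceil - 1)(n - \lceil n/2 \rceil)]$ admit a representation $k = t \cdot s$ with $t,s \geq 0$ meeting the admissibility constraints $s \leq n-1$ and $t \leq m-1 = n-s-1$; equivalently, $t + s \leq n-1$ whenever $t,s \geq 1$.

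For the forward direction (necessity), I would assume a flag-shaped blocker $B_n(m,t)$ of cardinality $p$ exists and split into cases according to $t$ and $s = n-m$. If either factor is zero, then $k = 0$, which satisfies $k \leq m-1$ trivially. If exactly one of $t,s$ equals $1$, the constraint $t \leq m-1$ forces $k \leq n-2$, matching the condition $k \leq m-1$ at the extremal choice $m = n-1$. In the remaining case $t \geq 2$ and $s \geq 2$, the product $k = ts$ is a nontrivial product of two integers each at least $2$, hence composite by definition.

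For the backward direction (sufficiency), I would construct an explicit blocker in each case. If $k \leq n-2$, take $m = n-1$ and $t = k$ (so $s = 1$); then $B_n(n-1, k)$ is a valid flag-shaped blocker of cardinality $n + k = p$. If $k$ is composite, I would exhibit a factorization $k = ab$ with $a, b \geq 2$ satisfying $a + b \leq n-1$, and take $m = n - b$, $t = a$, so that $B_n(n-b, a)$ realizes the cardinality $p$. The hardest part will be showing that such a factorization always exists for composite $k$ within the stated range. I would let $a$ be the smallest prime divisor of $k$ (so that $a \leq \sqrt{k}$) and set $b = k/a$; the tightest case is $k = 2q$ for a prime $q$, where the inequality $a + b = q + 2 \leq n - 1$ must be verified directly from the hypothesis $k \leq (\lceil n/2 \rceil - 1)(n - \lceil n/2 \rceil)$. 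A parity-based case analysis on $n$ should pin down the extremal factorization, which corresponds to the equality case of the upper bound realized by $(a,b) = (\lceil n/2 \rceil - 1,\; n - \lceil n/2 \rceil)$ or its transpose.
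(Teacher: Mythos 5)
Your reduction to a factorization problem ($k := p-n = ts$ with $s := n-m$, subject to $t + s \le n-1$) is exactly the right framing and matches the structure implicit in the paper. Your necessity analysis is fine and the easy sufficiency case ($k \le n-2$ via $s=1$, $t=k$, $m=n-1$) is fine. The genuine gap is the step you yourself single out as ``the hardest part'': showing that every composite $k$ in the allowed range $0 \le k \le (\lceil n/2 \rceil -1)(n-\lceil n/2 \rceil)$ admits a factorization $k = ab$ with $a+b \le n-1$. This step cannot be closed, because the claim is false. Take $n=12$: then the allowed range of $k$ is $[0,\,5\cdot 6]=[0,30]$ and the admissibility constraint is $a+b\le 11$. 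For $k=22$, which is composite and in range, the only factorizations into positive integers are $1\cdot 22$ and $2\cdot 11$, with sums $23$ and $13$, both exceeding $11$; hence no admissible $(t,s)$ exists and there is no flag-shaped blocker $B_{12}(m,t)$ of cardinality $p=34$. The same failure occurs at $k=26$. This is precisely the family $k=2q$ ($q$ prime near the top of the range) that you identified as the tight case, but it defeats the statement outright rather than being a delicate boundary case, so the smallest-prime-factor plus parity-analysis strategy has nothing to land on.

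You should not feel you missed a trick from the paper: the paper's own $\Leftarrow$ argument does not carry out this step either. It merely restates ``if $n-m\ge 2$ and $t\ge 2$ then $p-n=t(n-m)$ must be a composite number such that $1\le m\le n$ and $0\le t\le m-1$,'' which presupposes the admissible $(m,t)$ rather than producing one, so the existence claim is never actually established. (The paper's $\Rightarrow$ argument also has a soft spot: it asserts that $t(n-m)>m-1$ rules out $t=1$, but $t=1$ with $n-m$ prime and $m<(n+1)/2$ is admissible; the forward direction survives only under the weaker reading ``$k\le n-2$ or $k$ composite,'' which your write-up also adopts.) A correct version of the theorem would replace the right-hand side of the equivalence by the honest condition ``$k$ admits a factorization $k=ts$ with $t+s\le n-1$,'' which is strictly stronger than ``$k\le n-2$ or $k$ composite,'' or else the range of $p$ must be further restricted.
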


\begin{proof}
    $\Rightarrow$ Suppose there exists a flag-shaped blocker with cardinality $p$. If $p-n$ is a prime number greater than $m-1$, then this implies that $t(n-m)$ is also prime and greater than $m-1$. However, $t(n-m)$ can only be prime if either $t=1$ or $n-m=1$. The requirement that $t(n-m) > m-1$ prevents either case from occurring. Thus, $p-n=t(n-m)$ cannot be prime, a contradiction. This implies that if there exists a flag-shaped blocker with cardinality $p$, then either $p-n \leq m-1$ is true or $p-n$ is a composite number.
    %For example, consider a case where $t=m-1$ and $n-m=2$. It follows that $t(n-m)=2(m-1)>m-1$ since \textcolor{red}{$m \geq 2$}. However, $t(n-m)=2(m-1)$ is an even number, meaning it cannot be prime, so $p-n=t(n-m)$ also cannot be prime.

    $\Leftarrow$ %Suppose that $p-n \leq m-1$. 
    For $n-m=1$, $p=n+t$ and $0 \leq t \leq m-1,$ so
    $$n \leq p \leq n+m-1$$
    $$0 \leq p-n \leq m-1,$$
    \noindent and all conditions on $p, n, m,$ and $t$ for a flag-shaped blocker, as defined in Section \ref{Section 2}, are met. This means that when $p-n \leq m-1$, there does exist a flag-shaped blocker with cardinality $p$.

    Now, suppose that $p-n$ is a composite number. If $n-m \geq 2$ and $t \geq 2$, then $p-n = t(n-m)$ must be a composite number such that $1 \leq m \leq n$ and $0 \leq t \leq m-1$. All conditions on $p, n, m,$ and $t$ for a flag-shaped blocker are satisfied, so there does exist a flag-shaped blocker with cardinality $p$. \end{proof}

Since we know the range of cardinalities for flag-shaped blockers, we briefly expand our view to consider all minimum blockers, not simply flag-shaped blockers. Because all minimal blockers are minimum blockers, the lower bound for the cardinality of a minimum blocker of an $n \times n$ matrix is $n$, as each letter of the Hankel-cyclic decomposition must appear at least once in the blocker. Determining the upper bound is more complicated, since the shapes of all minimum blockers have not yet been fully characterized. Nonetheless, we provide the following conjecture for the upper bound of the cardinality of all minimum blockers and show that flag-shaped blockers can achieve this upper bound.

\begin{conjecture} \label{conjecture1}
The upper bound for the cardinality of a minimum blocker of all $n \times n$ 
$123$-avoiding permutation matrices is $r \times s$, where $|r-s| \leq 1$ and $r + s = n + 1$.
\end{conjecture}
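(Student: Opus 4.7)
The plan is to split Conjecture~\ref{conjecture1} into two parts: achievability of the bound $rs$, and the upper bound itself. Achievability follows immediately from the work done earlier: by Theorem~\ref{Theorem2.3}, the flag-shaped blocker $B_n(m,m-1)$ is a minimum blocker with cardinality $n+(m-1)(n-m)=m(n-m+1)$. Writing $r=m$ and $s=n-m+1$ gives $r+s=n+1$, and taking $m=\lceil (n+1)/2\rceil$ both maximizes the product $rs$ and forces $|r-s|\le 1$. So the conjectured value is attained within the class of flag-shaped blockers, and the remaining task is entirely the upper bound.

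My approach to the upper bound is to work with minimum blockers through their witness families. Given a minimum blocker $B$ of cardinality $k$, minimality provides, for each $b\in B$, a $123$-avoiding permutation matrix $P_b$ whose support meets $B$ exactly at $b$. Summing, $M=\sum_{b\in B} P_b$ is a nonnegative integer matrix with every row-sum and column-sum equal to $k$, whose entries on $B$ are each exactly $1$, and whose entries outside $B$ sum to $k(n-1)$. The idea is to combine this global weight constraint with the geometric consequences of $123$-avoidance for each $P_b$: by Mirsky's theorem, the support of a $123$-avoiding permutation splits into at most two strictly decreasing chains in the coordinate poset. From this I would try to derive a double-counting inequality bounding $k$ by a product of two quantities whose sum is at most $n+1$, matching the extremal flag shape $B_n(m,m-1)$. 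The final step, passing from the inequality to the bound $rs$ with $|r-s|\le 1$, is a routine AM--GM optimization once the summands are identified.

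The main obstacle is coupling the pointwise condition $|\operatorname{supp}(P_b)\cap B|=1$ with the global structural constraint of $123$-avoidance on each $P_b$. A minimum blocker need not have any preferred orientation, so any argument mimicking the flag-shape analysis of Section~\ref{Section 2} will have to abstract away from coordinates. A promising route is induction on $n$, using the $L$-shape classification of $n$-element minimal blockers in \cite{BC} as the base case and peeling off an exterior row or column along the witness $P_b$ of a corner zero $b$; the inductive step, however, requires proving that the peeling preserves minimality, which is not automatic and is the step where I expect the argument to get stuck. An alternative route, aligned with Section~4, is to recast the problem polytopally: a minimum blocker corresponds to a minimal collection of coordinate hyperplanes whose intersection with the $123$-avoiding polytope is empty, so that the upper bound becomes a statement about maximal irredundant covers of its facial structure. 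I expect this polytope-theoretic reformulation to be the most technically demanding step, but also the most likely to succeed, since it situates the extremal count within the dimension-counting framework the authors introduce later in the paper.
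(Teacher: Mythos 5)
The statement you are addressing is stated in the paper as a \emph{conjecture}, and the authors do not prove it in general. What the paper does prove, in the short argument immediately following the conjecture, is only the case $n=3$, and that proof is quite different in character from anything you propose: it is a direct case analysis on the permanent of a $3\times 3$ $(0,1)$-matrix $A$ that is $123$-forcing. If $\operatorname{per}(A)=0$, the Frobenius--K\"{o}nig theorem gives a rectangular $r\times s$ block of zeros with $r+s=4$, and any zero outside it is redundant, so a minimum blocker has at most $4$ zeros. If $\operatorname{per}(A)\neq 0$, after permutation $A$ has $1$'s on the diagonal; since $A$ is $123$-forcing, each of the three off-diagonal symmetric pairs must be blocked, giving cardinality $3$. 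That is the entirety of what the paper establishes.

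Your achievability argument is correct and matches the paper's observation: $B_n(m,m-1)$ with $m=\lceil (n+1)/2 \rceil$ is a minimum blocker by Theorem~\ref{Theorem2.3} of cardinality $m(n-m+1)=rs$ with $|r-s|\le 1$, $r+s=n+1$. However, your upper-bound discussion is a plan, not a proof, and you yourself flag the gaps. In the witness-family route you never extract the double-counting inequality; you assert that Mirsky's two-chain decomposition ``should'' yield quantities summing to $n+1$, but you give no mechanism connecting the chain structure of the individual $P_b$'s to the cardinality $k=|B|$, and it is not clear how the constraint $|\operatorname{supp}(P_b)\cap B|=1$ forces such a sum. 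In the inductive route, as you note, peeling a row/column along a corner witness does not obviously preserve minimality of the residual blocker in the smaller matrix, and this is exactly where the induction would break. The polytopal recasting is a reasonable reformulation but again no argument is given. Since the paper leaves the general statement open and only resolves $n=3$, an honest comparison is: the paper proves a small special case by elementary permanent casework, while you outline three possible general strategies, each of which currently has an unresolved step that you correctly identify. Your write-up would be improved by at minimum carrying out the $n=3$ verification, and by making explicit which of your three routes you believe closes, together with the missing lemma (e.g.\ a minimality-preservation statement for the peeling step) that would be required.
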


There exist minimum flag-shaped blockers with exactly $r \times s$ positions. These can occur when an adjacent set of positions forming a blocker given by the Frobenius-K\"{o}nig theorem, such that $|r-s| \leq 1$, is placed at the northwest or southeast corner of a matrix, since these blockers are special cases of flag-shaped blockers, as described in Section \ref{Section 2}. The upper bound can also be expressed in terms of only $m$ and $n$ by denoting $m=\lceil \frac{n}{2} \rceil$ and $t=m-1=\lceil \frac{n}{2} \rceil -1$ for a maximum cardinality of $n+(\lceil \frac{n}{2} \rceil -1)(n - \lceil \frac{n}{2} \rceil)$.

\begin{example} Minimum blockers of $10 \times 10$ matrices with cardinality $r \times s =30$.

\[\left[\begin{array}{c|c|c|c|c|c|c|c|c|c}
\cellcolor{red!20}a&\cellcolor{red!20}b&\cellcolor{red!20}c&\cellcolor{red!20}d&\cellcolor{red!20}e&\cellcolor{red!20}f&g&h&i&j\\ \hline
\cellcolor{red!20}b&\cellcolor{red!20}c&\cellcolor{red!20}d&\cellcolor{red!20}e&\cellcolor{red!20}f&\cellcolor{red!20}g&h&i&j&a\\ \hline
\cellcolor{red!20}c&\cellcolor{red!20}d&\cellcolor{red!20}e&\cellcolor{red!20}f&\cellcolor{red!20}g&\cellcolor{red!20}h&i&j&a&b\\ \hline
\cellcolor{red!20}d&\cellcolor{red!20}e&\cellcolor{red!20}f&\cellcolor{red!20}g&\cellcolor{red!20}h&\cellcolor{red!20}i&j&a&b&c\\ \hline
\cellcolor{red!20}e&\cellcolor{red!20}f&\cellcolor{red!20}g&\cellcolor{red!20}h&\cellcolor{red!20}i&\cellcolor{red!20}j&a&b&c&d\\ \hline
f&g&h&i&j&a&b&c&d&e\\ \hline
g&h&i&j&a&b&c&d&e&f\\ \hline
h&i&j&a&b&c&d&e&f&g\\ \hline
i&j&a&b&c&d&e&f&g&h\\ \hline
j&a&b&c&d&e&f&g&h&i
\end{array}\right]
\ \ and \ \
\left[\begin{array}{c|c|c|c|c|c|c|c|c|c}
a&b&c&d&e&f&g&h&i&j\\ \hline
b&c&d&e&f&g&h&i&j&a\\ \hline
c&d&e&f&g&h&i&j&a&b\\ \hline
d&e&f&g&h&i&j&a&b&c\\ \hline
e&f&g&h&i&\cellcolor{red!20}j&\cellcolor{red!20}a&\cellcolor{red!20}b&\cellcolor{red!20}c&\cellcolor{red!20}d\\ \hline
f&g&h&i&j&\cellcolor{red!20}a&\cellcolor{red!20}b&\cellcolor{red!20}c&\cellcolor{red!20}d&\cellcolor{red!20}e\\ \hline
g&h&i&j&a&\cellcolor{red!20}b&\cellcolor{red!20}c&\cellcolor{red!20}d&\cellcolor{red!20}e&\cellcolor{red!20}f\\ \hline
h&i&j&a&b&\cellcolor{red!20}c&\cellcolor{red!20}d&\cellcolor{red!20}e&\cellcolor{red!20}f&\cellcolor{red!20}g\\ \hline
i&j&a&b&c&\cellcolor{red!20}d&\cellcolor{red!20}e&\cellcolor{red!20}f&\cellcolor{red!20}g&\cellcolor{red!20}h\\ \hline
j&a&b&c&d&\cellcolor{red!20}e&\cellcolor{red!20}f&\cellcolor{red!20}g&\cellcolor{red!20}h&\cellcolor{red!20}i
\end{array}\right]\] \hfill $\cvd$
\end{example}

%Utilizing both Theorem \ref{Theorem 4} and the fact that flag-shaped blockers can obtain the maximum cardinality given by Conjecture \ref{conjecture1}, we are able to enumerate which cardinalities flag-shaped blockers are unable to obtain. To do so, we can use the sieve of Eratosthenes, an algorithm for finding all prime numbers smaller than a natural number $N$. In this case, we set $N=rs-n,$ where $r+s=n+1$ and $|r-s| \leq 1$, and we only examine values greater than $m-1$. Then, for cardinality $p \in \mathbb{N}$ such that $m-1 < p-n \leq rs-n$, we can use the sieve of Eratosthenes to find all values of $p-n$ that are prime. If $p-n$ is prime, then there does not exist a flag-shaped blocker with cardinality $p$. \textcolor{red}{citation needed for the sieve of E?}

Furthermore, we are able to determine which cardinalities are unable to be obtained by flag-shaped blockers by using the sieve of Eratosthenes to find all values of $p-n$ that are prime, where $m-1 < p-n \leq rs-n$ and $|r-s| \leq 1$.

The last thing we will prove in this section is that Conjecture~\ref{conjecture1} is true for the $n=3$ case. That is, we can show that the upper bound of a minimum flag-shaped blocker of a $3 \times 3$ matrix is $r \times s = 2 \times 2 = 4$.

\begin{proof} Let $A$ be a $3\times 3$ $(0,1)$-matrix avoiding $123$-permutation.
If the permanent of $A$ is $0,$ then $A$ contains a rectangular $r\times s$ zero submatrix with $r+s=4.$ According to Frobenius-K\"{o}nig theorem, any zero not in the $r\times s$ submatrix is not necessary, so a minimum blocker of $A$ at most contains four zeros when $r=s=2.$

If the permanent of $A$ is nonzero, then $A$ contains three $1$'s on the diagonal.
\[\left[\begin{array}{c|c|c}
1&\phantom{0}&\phantom{0} \\ \hline
\phantom{0}&1&\phantom{0} \\ \hline
\phantom{0}&\phantom{0}&1
\end{array}\right]\]

If there is a pair of $1'$s symmetric with respect to the diagonal, we can construct a $123$-avoiding permutation matrix by replacing the two $1'$s on the diagonal by the pair of $1'$s.

\[\left[\begin{array}{c|c|c}
1&\phantom{0}&\cellcolor{red!20}\phantom{0} \\ \hline
\phantom{0}&1&\phantom{0} \\ \hline
\cellcolor{red!20}\phantom{0}&\phantom{0}&1
\end{array}\right] \Rightarrow \left[\begin{array}{c|c|c}
&\phantom{0}&\cellcolor{red!20}1 \\ \hline
\phantom{0}&1&\phantom{0} \\ \hline
\cellcolor{red!20}1&\phantom{0}&
\end{array}\right]\]

There are three pairs of elements and we just need to block one element in each pair, so a minimum blocker has cardinality $3.$ \end{proof}

\section{The Polytope Generated by 123-Avoiding Permutation Matrices}

 One of our motivations for studying blockers of $123$-avoiding permutation matrices is to gain a better understanding of the polytope generated by $123$-avoiding permutation matrices, denoted $\Omega_n(\overline{123})$, whose dimension is $(n-1)^2$. In \cite{BC3}, Brualdi and Cao show that each minimal blocker of $n\times n$ $123$-avoiding permutation matrices determines a facet of $\Omega_n(\overline{123}),$ and a face of $\Omega_n(\overline{123})$ lives in dimension $(n-1)^2-1.$ We seek to learn more about the dimension of the faces of $\Omega_n(\overline{123})$ determined by flag-shaped blocker. The dimension of a face of $\Omega_n(\overline{123})$ determined by a minimum blocker is equivalent to the number of linearly independent $123$-avoiding permutation matrices that intersect the blocker exactly once, and no more. We present an inductive argument to find a more precise upper bound for the dimension.

\begin{lemma}\label{Lemma18} Given a flag-shaped minimum blocker $B_n(m,t)$ of all $n \times n$ 
$123$-avoiding permutation matrices with $m=n-1$, there is a $t \times 1$ submatrix of adjacent positions containing the $(n,n)$ position that cannot be used to construct a $123$-avoiding permutation matrix that intersects the blocker exactly once.
\end{lemma}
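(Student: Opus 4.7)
The plan is to fix an arbitrary $123$-avoiding permutation $\sigma$ whose matrix has a $1$ at some position $(j,n)$ with $j\in\{n-t+1,\ldots,n\}$ and show this matrix must meet the blocker $B_n(n-1,t)$ at least twice. Since every $123$-avoiding permutation matrix intersects the blocker at least once by Lemma~\ref{Lemma2.1}, this will imply that no position in the $t\times 1$ submatrix $\{(n-t+1,n),\ldots,(n,n)\}$, which contains $(n,n)$, can appear in a permutation matrix meeting the blocker exactly once.

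The driving structural observation is that $\sigma(j)=n$ is the maximum value of $\sigma$, so for any $i_1<i_2<j$ with $\sigma(i_1)<\sigma(i_2)$ the triple $(i_1,i_2,j)$ is a $123$ pattern. Hence $\sigma$ is strictly decreasing on $\{1,\ldots,j-1\}$. Because $j-1\geq n-t$, this initial segment covers every row $1,\ldots,n-t$ of the pole, and because $j\geq n-t+1\geq 3$ (using $t\leq m-1=n-2$), it also covers both flag rows $1$ and $2$.

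From here I would split on $\sigma(1)$. If $\sigma(1)=n-1$, then the matrix already hits the pole at $(1,n-1)$; avoiding a second hit in row $2$ forces $\sigma(2)\leq n-2-t$, and the strict decrease then packs $\sigma(2),\ldots,\sigma(j-1)$ as $j-2$ distinct values in $\{1,\ldots,n-2-t\}$, which by pigeonhole gives $j\leq n-t$, contradicting $j\geq n-t+1$. If instead $\sigma(1)\leq n-2$, then $n-1$ is missing from $\sigma(1),\ldots,\sigma(j-1)$, so column $n-1$ is used at some row $k>j>n-t$, lying below the pole and producing no pole hit; having exactly one flag hit then forces either $\sigma(1)\in\{n-1-t,\ldots,n-2\}$ with $\sigma(2)\leq n-2-t$ (the same pigeonhole contradiction) or $\sigma(1)\leq n-2-t$, which in turn forces $\sigma(2)<n-1-t$ and produces zero blocker hits, contradicting Lemma~\ref{Lemma2.1}.

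The only real obstacle I anticipate is keeping the case split tidy: one must treat the ``top of the pole'' and ``strictly below the pole'' subcases for $\sigma(1)$ separately, remember that the all-missing branch is ruled out by the blocking property rather than by a direct combinatorial argument, and verify in each surviving branch that the same elementary count comparing $j-2$ with $n-2-t$ closes the argument.
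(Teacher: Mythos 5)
Your proposal is correct, and the underlying structural insight is the same as the paper's (choosing $(j,n)$ with $j$ in the last $t$ rows forces the prefix $\sigma(1),\ldots,\sigma(j-1)$ to be strictly decreasing), but the way you close the argument is genuinely different and, in my reading, tighter. The paper deletes the used row and column, peels off $t-1$ more rows, and then argues geometrically that every resulting $(n-t)\times(n-t)$ residual contains a $12$-pattern supported on two blocker positions because ``the remaining flag portion will always form at least an $(n-m+1)\times 2$ submatrix with at least two blocker positions on each full diagonal.'' That step is stated rather loosely: after removing $t-1$ of the $n-1$ columns, only one flag column might survive, so the claim as written really depends on the pole column being in play too, and the paper never makes that interplay explicit. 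Your proof instead stays with the permutation itself: the decreasing prefix covers all pole rows and both flag rows, so you split on $\sigma(1)$, use the disjointness of pole column $n-1$ from the flag columns $\{n-1-t,\ldots,n-2\}$ to count hits precisely, and in each branch either a pigeonhole bound ($j-2\le n-2-t$, hence $j\le n-t$) contradicts $j\ge n-t+1$ or the hit count drops to zero, contradicting Lemma~\ref{Lemma2.1}. The only caveats: the sentence ``having exactly one flag hit then forces either \ldots'' should be phrased as a clean dichotomy on $\sigma(1)$ (either $\sigma(1)\ge n-1-t$ or $\sigma(1)\le n-2-t$), with the ``exactly one hit'' hypothesis used only in the first branch; and when you write things up you should note explicitly that in Case~1 no second pole hit can occur because column $n-1$ is already occupied at row $1$, and that $\sigma(1)=n-1$ is not itself a flag hit since $n-1\notin\{n-1-t,\ldots,n-2\}$. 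With those small clarifications this is a complete and arguably more transparent proof than the one in the paper.
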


\begin{proof}
Consider an $n \times n$ matrix containing a flag-shaped blocker such that $m=n-1$. Our claim is that the elements at the intersection of column $n$ and the last $t$ rows cannot be used to construct any $123$-avoiding permutation matrix that intersects the blocker exactly once. %Thus, we have at least $t$ fewer linearly independent permutation matrices than are contained in the $n \times n$ matrix, so a minimum of $t$ dimensions are lost.

Suppose we use any position from the intersection of column $n$ and the last $t$ rows to attempt to construct a $123$-avoiding permutation matrix that intersects the blocker exactly once. Now, consider the submatrix formed by deleting the column and row in which this position resides. We must utilize $t-1$ additional positions from the last $t-1$ rows of the submatrix to attempt to construct a $123$-avoiding permutation matrix. The specific positions we use are not important in this case, so long as they do not form a $123$-pattern with the first position we chose from column $n$.

We can now delete the last $t-1$ rows from the submatrix. Notice that the new submatrix is not square, as it has $t-1$ more columns than rows. No matter which $t-1$ columns we delete to form an $(n-t) \times (n-t)$ submatrix, we can never construct a $12$-avoiding permutation matrix in this submatrix without intersecting the blocker more than once. This is due to the fact that the remaining flag portion of the blocker will always form at least an $(n-m+1) \times 2$ submatrix with at least two blocker positions on each full diagonal of the $(n-t) \times (n-1)$ submatrix. Thus, this $(n-t) \times (n-1)$ submatrix must contain a $12$-pattern. However, this $12$-pattern paired with the position from column $n$ forms a $123$-pattern. Thus, we cannot construct a $123$-avoiding permutation matrix using any position from the intersection of column $n$ and the last $t$ rows.
\end{proof}

\begin{theorem}\label{Theorem19}
Given a flag-shaped minimum blocker $B_n(m,t)$ of all $n \times n$ $123$-avoiding permutation matrices, %with $1 \leq m\leq n-1$, 
there is a $t \times (n-m)$ submatrix of adjacent positions containing the $(n,n)$ position that cannot be used to construct a $123$-avoiding permutation matrix that intersects the blocker exactly once. \end{theorem}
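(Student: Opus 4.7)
The plan is to generalize the counting argument of Lemma~\ref{Lemma18}, showing that for any position in the $t\times(n-m)$ corner, every $123$-avoiding permutation matrix passing through it is forced to pick up at least two blocker entries in the flag rows.

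Suppose for contradiction that a $123$-avoiding permutation matrix $P$ has a $1$ at $(i_0,j_0)$ with $n-t+1\le i_0\le n$ and $m+1\le j_0\le n$, and meets $B_n(m,t)$ exactly once. Classify the other $1$'s of $P$ relative to $(i_0,j_0)$ into the quadrants NW, NE, SW, SE. First, SE must be empty: any SE point, together with $(i_0,j_0)$ and any NW point, forms a $123$-pattern, and the alternative that NW is empty would place all $i_0-1\ge n-t$ rows above $i_0$ into the $n-j_0$ columns east of $j_0$, which is impossible because $j_0\ge m+1\ge t+2$. Hence every row below $i_0$ lies in SW. A second application of $123$-avoidance, with $(i_0,j_0)$ in the role of the largest element, forces the NW $1$'s to be strictly decreasing in column as the row index grows.

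A column tally then yields $a:=|NW|=i_0+j_0-n-1$; and because the $(n-m+1)-b$ flag rows lying in NE must fit into the $n-j_0$ columns $\{j_0+1,\dots,n\}$, the number $b$ of flag rows sitting in NW satisfies $b\ge j_0-m+1\ge 2$. Every flag row has smaller index than every non-flag row of the blocker, so the first $b$ entries of the decreasing NW sequence are exactly the flag rows in NW and they receive the $b$ largest NW column values. The theorem is thereby reduced to showing that at least two of these top $b$ column values lie in the flag-row blocker set $\{m-t,\dots,m\}$, since each such value would contribute a distinct blocker intersection in a flag row and violate the ``exactly one'' hypothesis.

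Setting $x=|NW\cap\{m+1,\dots,j_0-1\}|$ and $y=|NW\cap\{m-t,\dots,m\}|$, the number of the top $b$ NW values that lie in $\{m-t,\dots,m\}$ equals $\min(b-x,y)$. The bound $x\le j_0-m-1$ gives $b-x\ge 2$ at once, so the main conceptual obstacle is the complementary estimate $y\ge 2$; this is where the width of the pole is coupled to the height of the corner. To prove it, observe that every column in $\{m-t,\dots,m\}$ must host a $1$ of $P$ coming from either NW or SW, since NE rows live in columns $>j_0>m$ and row $i_0$ itself uses column $j_0>m$; combined with $|SW|=n-i_0\le t-1$, at most $t-1$ of these $t+1$ columns can land in SW, forcing at least two into NW. Therefore $\min(b-x,y)\ge 2$, giving the promised pair of blocker intersections in flag rows and completing the contradiction.
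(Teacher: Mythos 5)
Your proof is correct, and it takes a genuinely different route from the paper's. The paper proves Theorem~\ref{Theorem19} by induction on $p=n-m$, with Lemma~\ref{Lemma18} as the base case: it deletes the row of the chosen corner position and column $n$, observes that the $(n-1)\times(n-1)$ submatrix ``contains a flag-shaped blocker (perhaps with more blocker positions than necessary) with $p=k$,'' and invokes the inductive hypothesis. That step is a little informal, since the surviving zero set is not literally a $B_{n-1}(m',t')$ but a superset of one, and the paper does not spell out why the hypothesis still transfers. You instead give a single direct quadrant-and-counting argument that handles every $p$ uniformly (so it subsumes Lemma~\ref{Lemma18} as well). The key points you establish --- SE empty, NW strictly decreasing by $123$-avoidance with $(i_0,j_0)$ as the largest element, $|NW|=i_0+j_0-n-1$, the lower bound $b\ge j_0-m+1\ge 2$ on flag rows in NW, and the two complementary counts $b-x\ge 2$ (from $x\le j_0-m-1$) and $y\ge 2$ (from $|SW|=n-i_0\le t-1$ against the $t+1$ columns $\{m-t,\dots,m\}$) --- all check out, and together they force at least two of the $b$ largest NW columns into $\{m-t,\dots,m\}$, i.e.\ at least two blocker hits in flag rows, contradicting ``exactly once.'' The trade-off: the paper's induction is easier to visualize via the deletion picture, while your argument is self-contained, quantitatively explicit, and closes the gap left by the paper's hand-wave about the reduced blocker's shape.
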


\vspace{-0.2in}
\begin{proof} Define $p:=n-m$. Lemma \ref{Lemma18} describes the case where $p=1$. Now, suppose that when $2\leq p\leq k$, there are $pt$ positions that cannot be used to construct a $123$-avoiding permutation matrix that intersects the blocker exactly once.

Suppose $p=k+1$. Consider any of the $t$ positions of an $n \times n$ matrix in the intersection of the $n$th column and the last $t$ rows. In order to construct a $123$-avoiding permutation matrix using any of these positions, a necessary condition is that we can construct a $123$-avoiding permutation matrix in the submatrix obtained by deleting the row and column the position that intersects the blocker only once resides in.

However, the $(n-1) \times (n-1)$ submatrix we obtain contains a flag-shaped blocker (perhaps with more blocker positions than necessary) with $p=k$. Then by the induction hypothesis, there are $kt$ positions we cannot use to construct a $123$-avoiding permutation matrix in the $(n-1) \times (n-1)$ submatrix  that intersects the blocker only once. We also know there are $t$ positions from the intersection of the last column and the last $t$ rows that we cannot use to construct a $123$-avoiding permutation matrix. Thus, there are a total of $kt+t=(k+1)t$ positions we cannot use to construct a $123$-avoiding permutation matrix that intersects the blocker only once. Then by mathematical induction, we have shown that there are $pt=(n-m)t$ positions of an $n\times n$ matrix that we cannot use. Thus, there is a $t \times (n-m)$ submatrix at the lower right corner of the matrix from which we cannot use any positions to construct a $123$-avoiding permutation matrix that intersects the blocker exactly once. %Thus, we lose $t(n-m)$ dimensions, so a minimum flag-shaped blocker determines a face of $\Omega_n(\overline{123})$ with a maximum of $(n-1)^2+1-t(n-m)$ dimensions.
\end{proof}

\begin{example}
To illustrate Theorem \ref{Theorem19}, consider the flag-shaped blocker $B_{10}(8,3)$. We are unable to construct a $123$-avoiding permutation matrix that intersects the blocker once, and no more, using any of the yellow positions.

$$\left[\begin{array}{c|c|c|c|c|c|c|c|c|c}
a&b&c&d&\cellcolor{red!20}e&\cellcolor{red!20}f&\cellcolor{red!20}g&\cellcolor{red!20}h&i&j\\ \hline
b&c&d&e&\cellcolor{red!20}f&\cellcolor{red!20}g&\cellcolor{red!20}h&\cellcolor{red!20}i&j&a\\ \hline
c&d&e&f&\cellcolor{red!20}g&\cellcolor{red!20}h&\cellcolor{red!20}i&\cellcolor{red!20}j&a&b\\ \hline
d&e&f&g&h&i&j&\cellcolor{red!20}a&b&c\\ \hline
e&f&g&h&i&j&a&\cellcolor{red!20}b&c&d\\ \hline
f&g&h&i&j&a&b&\cellcolor{red!20}c&d&e\\ \hline
g&h&i&j&a&b&c&\cellcolor{red!20}d&e&f\\ \hline
h&i&j&a&b&c&d&e&\cellcolor{yellow!20}f&\cellcolor{yellow!20}g\\ \hline
i&j&a&b&c&d&e&f&\cellcolor{yellow!20}g&\cellcolor{yellow!20}h\\ \hline
j&a&b&c&d&e&f&g&\cellcolor{yellow!20}h&\cellcolor{yellow!20}i
\end{array}\right]$$

The permutation matrix consisting of all $i$'s is the only $123$-avoiding permutation matrix using the yellow $i$. Clearly, this intersects the blocker twice. Thus, we can focus on just the yellow $f$ or one of the yellow $g$'s or $h$'s to illustrate the issue that arises when attempting to use any of these positions to form a $123$-avoiding permutation matrix. Without loss of generality, we will consider the $h$ in the last column.

We can delete column $10$ and row $9$ since we do not have to use another position from either in our construction of a $123$-avoiding permutation matrix. We also know that we will use one position from row $8$ and one from row $10$ for the permutation matrix, so we can focus on the $7 \times 9$ submatrix formed by deleting the last $t=3$ rows and the last column.

$$\left[\begin{array}{c|c|c|c|c|c|c|c|c||c}
a&b&c&d&\cellcolor{red!20}e&\cellcolor{red!20}f&\cellcolor{red!20}g&\cellcolor{red!20}h&i&j\\ \hline
b&c&d&e&\cellcolor{red!20}f&\cellcolor{red!20}g&\cellcolor{red!20}h&\cellcolor{red!20}i&j&a\\ \hline
c&d&e&f&\cellcolor{red!20}g&\cellcolor{red!20}h&\cellcolor{red!20}i&\cellcolor{red!20}j&a&b\\ \hline
d&e&f&g&h&i&j&\cellcolor{red!20}a&b&c\\ \hline
e&f&g&h&i&j&a&\cellcolor{red!20}b&c&d\\ \hline
f&g&h&i&j&a&b&\cellcolor{red!20}c&d&e\\ \hline
g&h&i&j&a&b&c&\cellcolor{red!20}d&e&f\\ \hline \hline
h&i&j&a&b&c&d&e&f&g\\ \hline
i&j&a&b&c&d&e&f&g&\cellcolor{green!20}h\\ \hline
j&a&b&c&d&e&f&g&h&i
\end{array}\right]$$

We can delete the two columns in which the positions we use from rows $8$ and $10$ reside. However, notice that there are no two columns we can delete to form a $7 \times 7$ submatrix in which we can construct a $12$-avoiding permutation matrix that intersects the blocker exactly once. For example, suppose we utilize the $e$ in row $8$ and the $f$ in row $10$ for the permutation matrix. The submatrix formed by deleting all rows and columns that already contain a position of the $123$-avoiding permutation matrix we are attempting to construct follows.

$$\left[\begin{array}{c|c|c|c|c|c|c}
a&b&c&d&\cellcolor{red!20}e&\cellcolor{red!20}f&g\\ \hline
b&c&d&e&\cellcolor{red!20}f&\cellcolor{red!20}g&a\\ \hline
c&d&e&f&\cellcolor{red!20}g&\cellcolor{red!20}a&b\\ \hline
d&e&f&g&a&b&c\\ \hline
e&f&g&a&b&c&d\\ \hline
f&g&a&b&c&d&e\\ \hline
g&a&b&c&d&e&f
\end{array}\right]$$

The $h$ from row $9$ of the original matrix is completely below and to the right of this $7 \times 7$ submatrix, so the only way we can complete the $n \times n$ $123$-avoiding permutation matrix is to construct a $12$-avoiding permutation matrix in this submatrix. However, this is only possible if we take the Hankel diagonal of the submatrix, but we intersect the blocker twice by doing so. Thus, it is impossible to construct a $123$-avoiding permutation matrix using the $h$ in the $(9,10)$ position without intersecting the blocker more than once. The same process can be used to show that none of the remaining positions in the $t \times (n-m)$ submatrix at the lower right corner of the matrix can be used to construct a $123$-avoiding permutation matrix intersecting the blocker only once. \hfill $\cvd$
\end{example}

%From Theorem \ref{Theorem19}, we present an important corollary relating to the facets of $\Omega_n(\overline{123})$, which have dimension $(n-1)^2-1$.

%\begin{corollary} If a minimum flag-shaped blocker determines a facet of $\Omega_n(\overline{123})$, then
%$t(n-m)\leq2$.
%\end{corollary}

%\begin{proof}
%    Suppose $t(n-m)>2$, meaning $t(n-m)$ at least equals $3$. Then, the maximum number of linearly independent $123$-avoiding permutation matrices that intersect the blocker exactly once is $(n-1)^2+1-3=(n-1)^2-2$, which is not large enough to define a facet of $\Omega_n(\overline{123})$. Thus, we must have $t(n-m)\leq2$, which indicates that not every minimum blocker defines a facet of $\Omega_n(\overline{123})$.
%\end{proof}

Using Theorem \ref{Theorem19}, we can determine the upper bound for the dimension of a face of $\Omega_n(\overline{123})$.

\begin{theorem}\label{Theorem11} A flag-shaped minimum blocker $B_n(m,t)$ of all $n \times n$ $123$-avoiding permutation matrices  %with $1 \leq m\leq n-1$ 
determines a face of $\Omega_n(\overline{123})$ with dimensions at most $(n-1)^2+1-t(n-m)$.
\end{theorem}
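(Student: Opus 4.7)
The plan is to read Theorem~\ref{Theorem19} as a set of linear obstructions that force every vertex of the face into a proper linear subspace of matrix space, and then to compute the dimension of that subspace.

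By the discussion preceding Lemma~\ref{Lemma18}, the dimension of the face $F$ of $\Omega_n(\overline{123})$ determined by $B_n(m,t)$ equals the number of linearly independent $123$-avoiding permutation matrices that intersect the blocker exactly once; these are exactly the vertices of $F$. I would therefore bound the dimension of the linear span of those vertices. Theorem~\ref{Theorem19} says that every such vertex $P$ satisfies $P_{ij}=0$ for each of the $t(n-m)$ positions $(i,j)$ in the lower-right $t\times(n-m)$ submatrix (rows $n-t+1,\ldots,n$ and columns $m+1,\ldots,n$). Consequently, the vertices of $F$ all lie in the subspace $W$ obtained by intersecting the linear span of $\Omega_n(\overline{123})$ with the $t(n-m)$ hyperplanes $\{M_{ij}=0\}$ indexed by the corner positions.

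The remaining task is to check that $\dim W=(n-1)^2+1-t(n-m)$. The linear span of $\Omega_n(\overline{123})$ is the space of $n\times n$ matrices with all row sums equal and all column sums equal, which has dimension $n^2-(2n-2)=(n-1)^2+1$. I would then argue that the $t(n-m)$ corner equations contribute $t(n-m)$ further linearly independent constraints via a direct coefficient comparison: from a putative linear dependence among the row-equality, column-equality, and corner-zero functionals, inspecting entries $(k,\ell)$ outside the corner with $k,\ell\ge 2$ forces all row multipliers to agree with all column multipliers up to sign, and then inspecting entries in row $1$ forces them to vanish; the corner multipliers themselves are then zero. The hypotheses $t\le m-1$ and $n-m\le n-1$ guarantee that no entire row or column of the matrix is contained inside the corner, which is what makes this independence argument clean.

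The main obstacle is this independence verification; once it is in hand, the bound $\dim F\le \dim W=(n-1)^2+1-t(n-m)$ follows immediately by combining Theorem~\ref{Theorem19} with the dimension count.
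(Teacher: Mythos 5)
Your proposal is correct, and it reaches the same bound by a dual route rather than the paper's primal one. The paper works with vectors: it takes the $a$ linearly independent vertices of the face (each of which, by Theorem~\ref{Theorem19}, has zero entries on the $t\times(n-m)$ corner block), exhibits $t(n-m)$ additional permutation matrices each hitting a distinct corner position, notes that the combined family of $a+t(n-m)$ matrices is linearly independent, and concludes $a+t(n-m)\le(n-1)^2+1=\dim\bigl(\operatorname{span}\Omega_n(\overline{123})\bigr)$. You instead work with functionals: the corner-entry evaluations together with the row/column-sum constraints cut out a subspace $W$, and you argue these $t(n-m)$ corner functionals are independent modulo the row/column constraints, so $\dim W=(n-1)^2+1-t(n-m)$ and $a\le\dim W$. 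These are two faces of the same dimension count — a permutation matrix with a $1$ at exactly one corner position is precisely a witness that the corresponding corner functional is nonzero on $V$ and independent of the others — so the underlying mathematics is equivalent; both hinge entirely on Theorem~\ref{Theorem19} and a codimension computation inside the $(n-1)^2+1$-dimensional span. Your route avoids having to construct the extra permutation matrices (which implicitly uses Hall's theorem and the hypothesis $t\le m-1$), at the cost of a coefficient-comparison argument for the independence of the functionals. That argument, as you sketch it, does go through: when the corner is nonempty one has $m\ge 2$ and $t\le n-2$, so row~$2$ and column~$2$ lie entirely outside the corner, which is exactly what lets the comparison at positions $(k,2)$ and $(2,\ell)$ pin down all the row and column multipliers to a common constant, and the row-$1$ (or column-$1$) comparison then kills that constant. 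So the sketch is sound, but do be aware that this independence check is the load-bearing step and should be written out carefully if the proof is to stand on its own.
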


\begin{proof}
Suppose there are $a$ linearly independent permutation matrices that intersect the blocker exactly once and that do not use any positions from the $t \times (n-m)$ submatrix at the lower right corner below and to the right of the flag-shaped blocker. It is possible to find $t(n-m)$ linearly independent permutation matrices (which do not intersect the blocker exactly once according to Theorem \ref{Theorem19}), each of which uses a unique position of the $t \times (n-1)$ submatrix. These $t(n-m)$ permutation matrices are linearly independent from one another and from the $a$ linearly independent permutation matrices found earlier. Thus, we have a total of $a+t(n-m)$ linearly independent permutation matrices, and this total cannot exceed the total number of linearly independent permutation matrices of an $n \times n$ matrix, which is $(n-1)^2+1$. Therefore, the number of linearly independent permutation matrices that intersects the blocker exactly once, given by $a$, is
$$a \leq (n-1)^2+1 - t(n-m).$$ \end{proof}

We have shown that there are at least $t(n-m)$ positions of an $n \times n$ matrix that we cannot use to construct a $123$-avoiding permutation matrix that intersects a minimum flag-shaped blocker exactly once. %We will now show how to find $(n-1)^2-t(n-m)$ linearly independent $123$-avoiding permutation matrices that intersect a minimum flag-shaped blocker only once, and no more. By doing so, we will show such a flag-shaped blocker defines a face of $\Omega_n(\overline{123})$ with dimension $(n-1)^2+1-t(n-m)$.
It is important to note that not all flag-shaped blockers will define a face of $\Omega_n(\overline{123})$ with dimension $(n-1)^2+1-t(n-m)$. However, there exist flag-shaped blockers that do define a face with precisely this dimension. Before describing these blockers, we first note an important aspect of of row and column blockers.

\begin{lemma} \label{Lemma12}
For each minimum blocker of all $n \times n$ $123$-avoiding permutation matrices that are composed of an entire row or column of an $n \times n$ matrix, there exist $(n-1)^2+1$ linearly independent $123$-avoiding permutation matrices that intersect the blocker exactly once each.
\end{lemma}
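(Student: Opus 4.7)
The plan is to reduce the lemma to the known dimension of $\Omega_n(\overline{123})$. I would first observe that a blocker consisting of an entire row (or, by the $123$-preserving symmetry of transposition, an entire column) of an $n\times n$ matrix is automatically met by \emph{every} permutation matrix in exactly one entry, since each row of a permutation matrix contains exactly one $1$. Consequently each $123$-avoiding permutation matrix intersects the blocker exactly once, so the ``exactly once'' constraint is vacuous, and the lemma reduces to the purely dimension-theoretic statement that the set of $n \times n$ $123$-avoiding permutation matrices contains $(n-1)^2+1$ linearly independent elements.

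Next I would invoke the dimension formula $\dim \Omega_n(\overline{123}) = (n-1)^2$ recalled at the opening of Section~4 and attributed to Brualdi and Cao. By definition, the affine hull of the $123$-avoiding permutation matrices has dimension $(n-1)^2$, so one can select $123$-avoiding permutation matrices $P_0, P_1, \ldots, P_{(n-1)^2}$ that are affinely independent.

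The remaining step is to upgrade affine independence to linear independence, which I would do by exploiting the row-sum structure of permutation matrices. Since every permutation matrix has all row sums equal to $1$ (not $0$), the affine hull of the $123$-avoiding permutation matrices does not contain the zero matrix. Concretely, if $\sum_{i=0}^{(n-1)^2} a_i P_i = 0$, then summing the entries of any single row gives $\sum_i a_i = 0$; substituting back yields $\sum_{i=1}^{(n-1)^2} a_i (P_i - P_0) = 0$, and the affine independence of $P_0, \ldots, P_{(n-1)^2}$ forces $a_1 = \cdots = a_{(n-1)^2} = 0$, whence $a_0 = 0$ as well. This produces the desired $(n-1)^2+1$ linearly independent $123$-avoiding permutation matrices, each intersecting the row or column blocker in exactly one entry.

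The proof is essentially a reduction followed by bookkeeping. The only nontrivial ingredient, the dimension identity $\dim \Omega_n(\overline{123}) = (n-1)^2$, is imported rather than reproved; the passage from affine to linear independence is the only spot where one must use the special structure of permutation matrices, and I do not anticipate any serious obstacle there.
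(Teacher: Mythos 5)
Your proposal is correct and takes essentially the same route as the paper: observe that a row or column blocker is met exactly once by every permutation matrix, then invoke the dimension of $\Omega_n(\overline{123})$ to extract $(n-1)^2+1$ linearly independent $123$-avoiding permutation matrices. The paper's version is terser and elides the affine-to-linear upgrade that you spell out via the row-sum argument.
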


\begin{proof}
Every $n \times n$ permutation matrix must intersect a row or column blocker exactly once. The polytope $\Omega_n(\overline{123})$ lives in dimension $(n-1)^2+1$, so there must exist the same number of linearly independent $123$-avoiding permutation matrices.
\end{proof}

\begin{theorem}
A rectangular flag-shaped blocker $B_n(m,m-1)$ of all $n \times n$ $123$-avoiding permutation matrices determines a face of $\Omega_n(\overline{123})$ with the maximum dimension $(n-1)^2+1-t(n-m)$.
\end{theorem}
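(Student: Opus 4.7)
My plan is to show that the rectangular flag-shaped blocker $B_n(m,m-1)$ attains the upper bound from Theorem~\ref{Theorem11}. First I would observe that $B_n(m,m-1)$ fills exactly the upper-left $(n-m+1)\times m$ rectangle $R=[1,n-m+1]\times[1,m]$, and a routine block-count on any doubly stochastic matrix shows that the inequality $\sum_{(i,j)\in R}x_{ij}\ge 1$ is tight precisely when $X$ vanishes on the lower-right $(m-1)\times(n-m)$ submatrix $R'=[n-m+2,n]\times[m+1,n]$. Thus the face $F$ is the convex hull of the 123-avoiding permutation matrices vanishing on $R'$, and it lives inside the affine subspace $A$ defined by the row-sum, column-sum, and $R'$-zero equations. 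A direct count of independent equations shows that $A$ has dimension $(n-1)^2+1-t(n-m)$ in the paper's convention, matching the upper bound; the task reduces to exhibiting $(n-1)^2+1-t(n-m)$ linearly independent 123-avoiding permutation matrices inside $F$.

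I would argue by induction on $n$. The base case $m=1$ follows directly from Lemma~\ref{Lemma12}: the blocker is then a single column, $t(n-m)=0$, and $F$ equals $\Omega_n(\overline{123})$, with dimension $(n-1)^2+1$. For the inductive step I would isolate the subfamily of permutations $\sigma\in F$ with $\sigma(1)=n$. Because $(1,n)$ lies outside both $R$ and $R'$ and is a strict maximum of $\sigma$ (so it can not participate in any $123$-pattern), deleting row~$1$ and column~$n$ yields a 123-avoiding $(n-1)\times(n-1)$ permutation matrix whose upper-left rectangle is exactly $B_{n-1}(m,m-1)$ and whose forbidden lower-right block is the correct $(m-1)\times(n-m-1)$ corner of the smaller matrix. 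The inductive hypothesis therefore supplies $(n-2)^2+1-(m-1)(n-1-m)$ linearly independent permutations; lifting these back to $F$ by appending a $1$ at $(1,n)$ preserves linear independence, since the lift uses only positions disjoint from row~$1$ and column~$n$.

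The remaining $\dim A-\bigl[(n-2)^2+1-(m-1)(n-1-m)\bigr]=2n-m-2$ directions must come from permutations in $F$ with $\sigma(1)\ne n$. For each of the $n-1$ row-$1$ positions $(1,j)$ with $j\in[1,n-1]$ and each of the $n-m$ column-$n$ positions $(i,n)$ with $i\in[2,n-m+1]$ I would exhibit an explicit 123-avoiding permutation in $F$ that uses that position as a $1$. Because every lifted inductive permutation has $(1,n)=1$ while every new permutation has $(1,n)=0$, any linear relation between the two collections forces the inductive coefficients to sum to zero; linear independence then reduces to showing that the ``zero-sum part'' of the inductive subfamily intersects the span of the new permutations only at zero, which unpacks into the independence of the row-$1$ and column-$n$ entries modulo the two sum constraints on them, yielding exactly $2n-m-2$ new independent directions. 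The main obstacle is the explicit case-by-case construction of the $2n-m-1$ candidate permutations, each of which must satisfy the cross-avoidance conditions from the proof of Lemma~\ref{Lemma2.1} (upper-right decreasing after the intersection row, lower-left decreasing above the intersection column) as well as intersecting $R$ exactly once; I expect this to close by arguments parallel to those already used in the proofs of Lemma~\ref{Lemma2.1} and Lemma~\ref{minimumR1}.
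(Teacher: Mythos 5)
Your approach is essentially the paper's: you induct on $n$ with $m$ fixed (equivalently, on $p=n-m$, as the paper writes it), peel off the position $(1,n)$ to reduce to the $(n-1)\times(n-1)$ subproblem with blocker $B_{n-1}(m,m-1)$, and then manufacture the remaining linearly independent permutations from the row-$1$ and column-$n$ positions. Your framing of $F$ as the subpolytope cut out by $\sum_{R}x_{ij}=1$ (equivalently, vanishing on the complementary $(m-1)\times(n-m)$ block $R'$) is a clean way to identify the target dimension and makes explicit the link between this theorem and Theorems~\ref{Theorem19} and~\ref{Theorem11}; the paper leaves this implicit, stating the dimension directly as a count of independent vertices.

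Two corrections and one genuine gap. First, the base case is misstated: since your induction fixes $m$ and decreases $n$, the base case is $n=m$, where $B_m(m,m-1)$ is all of row~$1$ (a row blocker), not $m=1$ (a column blocker); both are covered by Lemma~\ref{Lemma12}, but $m=1$ is not where your recursion bottoms out. Second, and more seriously, the remaining $2n-m-2$ permutations are not actually constructed. You correctly identify the candidate positions (the $n-1$ non-$(1,n)$ positions of row~$1$ and the $n-m$ positions of column~$n$ in rows $2,\ldots,n-m+1$), note that one linear dependence must be removed, and sketch the algebraic form of the independence argument, but you defer the cross-avoidance verification with ``I expect this to close.'' That deferred step is the substantive content of the inductive step: for each chosen position one must exhibit a $123$-avoiding permutation matrix in $F$ hitting the blocker exactly once and containing a position used by no other matrix in the list, which is what forces the coefficients to vanish (the paper's ``unique position'' device). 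The paper carries this out by grouping the positions into three families --- $m$ from pairing row-$1$ blocker positions with $(2,n)$, $n-m-1$ from the remaining non-blocker row-$1$ positions, and $n-m-1$ from column~$n$, rows $3,\ldots,n-m+1$ --- and checking each family explicitly; without that, the proof is incomplete.
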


\begin{proof}
We use induction on $p:=n-m$, starting by showing that when $p=1$, it is possible to find $(n-1)^2+1-t \cdot 1$ linearly independent $123$-avoiding permutation matrices that intersect the blocker exactly once. For illustrative purposes, we consider a $10 \times 10$ matrix with the flag-shaped blocker $B_n(9,8)$ while describing the general proof. Additionally, notice that $t=m-1$ for all rectangular flag-shaped blockers, and $m=n-1$ when $p=1$, so we are looking to construct $(n-1)^2+1-(n-2)\cdot 1$ linearly independent $123$-avoiding permutation matrices.

Using the $(1,n)$ position, there are $(n-2)^2+1$ linearly independent $123$-avoiding permutation matrices that intersect the blocker exactly once. This is because the $(n-1) \times (n-1)$ submatrix obtained by deleting the first row and last column contains $[(n-1)-1]^2+1$ such permutation matrices according to Lemma \ref{Lemma12}.

$$\left[\begin{array}{c|c|c|c|c|c|c|c|c||c}
\cellcolor{red!20}a&\cellcolor{red!20}b&\cellcolor{red!20}c&\cellcolor{red!20}d&\cellcolor{red!20}e&\cellcolor{red!20}f&\cellcolor{red!20}g&\cellcolor{red!20}h&\cellcolor{red!20}i&\cellcolor{green!20}j\\ \hline \hline
\cellcolor{red!20}b&\cellcolor{red!20}c&\cellcolor{red!20}d&\cellcolor{red!20}e&\cellcolor{red!20}f&\cellcolor{red!20}g&\cellcolor{red!20}h&\cellcolor{red!20}i&\cellcolor{red!20}j&a\\ \hline
c&d&e&f&g&h&i&j&a&b\\ \hline
d&e&f&g&h&i&j&a&b&c\\ \hline
e&f&g&h&i&j&a&b&c&d\\ \hline
f&g&h&i&j&a&b&c&d&e\\ \hline
g&h&i&j&a&b&c&d&e&f\\ \hline
h&i&j&a&b&c&d&e&f&g\\ \hline
i&j&a&b&c&d&e&f&g&h\\ \hline
j&a&b&c&d&e&f&g&h&i
\end{array}\right]$$

Using the $(2,n)$ position, there are an additional $n-1$ linearly independent $123$-avoiding permutation matrices that intersect the blocker exactly once, which are obtained using the $n-1$ blocker positions in the first row. The $123$-avoiding permutation matrices corresponding to each position will contain a unique position of the matrix, making them linearly independent from one another. For example, consider the $123$-avoiding permutation matrix obtained by using the yellow $e$ from the blocker in the example below.

$$\left[\begin{array}{c|c|c|c|c|c|c|c|c|c}
\cellcolor{red!20}a&\cellcolor{red!20}b&\cellcolor{red!20}c&\cellcolor{red!20}d&\cellcolor{yellow!20}e&\cellcolor{red!20}f&\cellcolor{red!20}g&\cellcolor{red!20}h&\cellcolor{red!20}i&j\\ \hline 
\cellcolor{red!20}b&\cellcolor{red!20}c&\cellcolor{red!20}d&\cellcolor{red!20}e&\cellcolor{red!20}f&\cellcolor{red!20}g&\cellcolor{red!20}h&\cellcolor{red!20}i&\cellcolor{red!20}j&\cellcolor{green!20}a\\ \hline
c&d&e&f&g&h&i&j&\cellcolor{green!20}a&b\\ \hline
d&e&f&g&h&i&j&\cellcolor{green!20}a&b&c\\ \hline
e&f&g&h&i&j&\cellcolor{green!20}a&b&c&d\\ \hline
f&g&h&i&j&\cellcolor{green!20}a&b&c&d&e\\ \hline
g&h&i&\cellcolor{green!20}j&a&b&c&d&e&f\\ \hline
h&i&\cellcolor{green!20}j&a&b&c&d&e&f&g\\ \hline
i&\cellcolor{green!20}j&a&b&c&d&e&f&g&h\\ \hline
\cellcolor{green!20}j&a&b&c&d&e&f&g&h&i
\end{array}\right]$$

Thus, in total we have 
$$(n-2)^2+1 + n-1 = (n-1)^2+1-(n-2)\cdot 1$$
linearly independent $123$-avoiding permutation matrices that intersect the blocker exactly once, as desired.

We move on to the induction step. Suppose that when $p=k$, where $2 \leq k \leq n-3$, the corresponding rectangular flag-shaped blocker achieves the maximum dimension 
$$(n-1)^2+1-(m-1)(k)=(n-1)^2+1-(m-1)(n-m-1),$$
since $t=m-1$ and $k=n-m-1$ for these blockers. We will show that if $p=k+1$, the rectangular flag-shaped blocker achieves the maximum dimension 
$$(n-1)^2+1-(m-1)(k+1)=(n-1)^2+1-(m-1)(n-m).$$ 
For illustrative purposes, we consider a $10 \times 10$ matrix with the flag-shaped blocker $B_n(5,4)$ while describing the general proof.

Using the inductive hypothesis, it is possible to find 
$$[(n-1)-1]^2+1-(m-1)[(n-1)-m] = (n-2)^2+1-(m-1)(n-m-1)$$
linearly independent $123$-avoiding permutation matrices that intersect the blocker exactly once using the $(1,n)$ position.

$$\left[\begin{array}{c|c|c|c|c|c|c|c|c||c}
\cellcolor{red!20}a&\cellcolor{red!20}b&\cellcolor{red!20}c&\cellcolor{red!20}d&\cellcolor{red!20}e&f&g&h&i&\cellcolor{green!20}j\\ \hline \hline
\cellcolor{red!20}b&\cellcolor{red!20}c&\cellcolor{red!20}d&\cellcolor{red!20}e&\cellcolor{red!20}f&g&h&i&j&a\\ \hline
\cellcolor{red!20}c&\cellcolor{red!20}d&\cellcolor{red!20}e&\cellcolor{red!20}f&\cellcolor{red!20}g&h&i&j&a&b\\ \hline
\cellcolor{red!20}d&\cellcolor{red!20}e&\cellcolor{red!20}f&\cellcolor{red!20}g&\cellcolor{red!20}h&i&j&a&b&c\\ \hline
\cellcolor{red!20}e&\cellcolor{red!20}f&\cellcolor{red!20}g&\cellcolor{red!20}h&\cellcolor{red!20}i&j&a&b&c&d\\ \hline
\cellcolor{red!20}f&\cellcolor{red!20}g&\cellcolor{red!20}h&\cellcolor{red!20}i&\cellcolor{red!20}j&a&b&c&d&e\\ \hline
g&h&i&j&a&b&c&d&e&f\\ \hline
h&i&j&a&b&c&d&e&f&g\\ \hline
i&j&a&b&c&d&e&f&g&h\\ \hline
j&a&b&c&d&e&f&g&h&i
\end{array}\right]$$

Using the $(2,n)$ position and the blocker positions in the first row, we can find $m$ additional linearly independent $123$-avoiding permutation matrices that intersect the blocker exactly once and that use a unique position of the matrix. For example, consider such a permutation matrix utilizing the yellow $d$ from the first row of the below matrix.

$$\left[\begin{array}{c|c|c|c|c|c|c|c|c|c}
\cellcolor{red!20}a&\cellcolor{red!20}b&\cellcolor{red!20}c&\cellcolor{yellow!20}d&\cellcolor{red!20}e&f&g&h&i&j\\ \hline 
\cellcolor{red!20}b&\cellcolor{red!20}c&\cellcolor{red!20}d&\cellcolor{red!20}e&\cellcolor{red!20}f&g&h&i&j&\cellcolor{green!20}a\\ \hline
\cellcolor{red!20}c&\cellcolor{red!20}d&\cellcolor{red!20}e&\cellcolor{red!20}f&\cellcolor{red!20}g&h&i&j&\cellcolor{green!20}a&b\\ \hline
\cellcolor{red!20}d&\cellcolor{red!20}e&\cellcolor{red!20}f&\cellcolor{red!20}g&\cellcolor{red!20}h&i&j&\cellcolor{green!20}a&b&c\\ \hline
\cellcolor{red!20}e&\cellcolor{red!20}f&\cellcolor{red!20}g&\cellcolor{red!20}h&\cellcolor{red!20}i&j&\cellcolor{green!20}a&b&c&d\\ \hline
\cellcolor{red!20}f&\cellcolor{red!20}g&\cellcolor{red!20}h&\cellcolor{red!20}i&\cellcolor{red!20}j&\cellcolor{green!20}a&b&c&d&e\\ \hline
g&h&i&j&\cellcolor{green!20}a&b&c&d&e&f\\ \hline
h&i&\cellcolor{green!20}j&a&b&c&d&e&f&g\\ \hline
i&\cellcolor{green!20}j&a&b&c&d&e&f&g&h\\ \hline
\cellcolor{green!20}j&a&b&c&d&e&f&g&h&i
\end{array}\right]$$

Furthermore, the $n-m-1$ unused positions to the right of the blocker in the first row can also be used to construct linearly independent $123$-avoiding permutation matrices that intersect the blocker exactly once and that use a unique position of the matrix. For example, consider such a permutation matrix utilizing the green $g$ from the first row below, where the yellow represents the intersection of the blocker and the $123$-avoiding permutation matrix.

$$\left[\begin{array}{c|c|c|c|c|c|c|c|c|c}
\cellcolor{red!20}a&\cellcolor{red!20}b&\cellcolor{red!20}c&\cellcolor{red!20}d&\cellcolor{red!20}e&f&\cellcolor{green!20}g&h&i&j\\ \hline
\cellcolor{red!20}b&\cellcolor{red!20}c&\cellcolor{red!20}d&\cellcolor{red!20}e&\cellcolor{red!20}f&g&h&i&j&\cellcolor{green!20}a\\ \hline
\cellcolor{red!20}c&\cellcolor{red!20}d&\cellcolor{red!20}e&\cellcolor{red!20}f&\cellcolor{red!20}g&h&i&j&\cellcolor{green!20}a&b\\ \hline
\cellcolor{red!20}d&\cellcolor{red!20}e&\cellcolor{red!20}f&\cellcolor{red!20}g&\cellcolor{red!20}h&i&j&\cellcolor{green!20}a&b&c\\ \hline
\cellcolor{red!20}e&\cellcolor{red!20}f&\cellcolor{red!20}g&\cellcolor{red!20}h&\cellcolor{red!20}i&\cellcolor{green!20}j&a&b&c&d\\ \hline
\cellcolor{red!20}f&\cellcolor{red!20}g&\cellcolor{red!20}h&\cellcolor{red!20}i&\cellcolor{yellow!20}j&a&b&c&d&e\\ \hline
g&h&i&\cellcolor{green!20}j&a&b&c&d&e&f\\ \hline
h&i&\cellcolor{green!20}j&a&b&c&d&e&f&g\\ \hline
i&\cellcolor{green!20}j&a&b&c&d&e&f&g&h\\ \hline
\cellcolor{green!20}j&a&b&c&d&e&f&g&h&i
\end{array}\right]$$

Similarly, there are $n-t-2=n-m-1$ positions in the intersection of the last column and row $3$ through row $n-m+1$ that can be used to construct linearly independent $123$-avoiding permutation matrices that intersect the blocker exactly once and that use a unique position of the matrix. For example, consider such a permutation matrix utilizing the green $c$ from the last column below.

$$\left[\begin{array}{c|c|c|c|c|c|c|c|c|c}
\cellcolor{red!20}a&\cellcolor{red!20}b&\cellcolor{red!20}c&\cellcolor{red!20}d&\cellcolor{red!20}e&f&g&h&\cellcolor{green!20}i&j\\ \hline 
\cellcolor{red!20}b&\cellcolor{red!20}c&\cellcolor{red!20}d&\cellcolor{red!20}e&\cellcolor{red!20}f&g&h&\cellcolor{green!20}i&j&a\\ \hline
\cellcolor{red!20}c&\cellcolor{red!20}d&\cellcolor{red!20}e&\cellcolor{red!20}f&\cellcolor{red!20}g&h&\cellcolor{green!20}i&j&a&b\\ \hline
\cellcolor{red!20}d&\cellcolor{red!20}e&\cellcolor{red!20}f&\cellcolor{red!20}g&\cellcolor{red!20}h&i&j&a&b&\cellcolor{green!20}c\\ \hline
\cellcolor{red!20}e&\cellcolor{red!20}f&\cellcolor{red!20}g&\cellcolor{red!20}h&\cellcolor{red!20}i&\cellcolor{green!20}j&a&b&c&d\\ \hline
\cellcolor{red!20}f&\cellcolor{red!20}g&\cellcolor{red!20}h&\cellcolor{red!20}i&\cellcolor{yellow!20}j&a&b&c&d&e\\ \hline
g&h&i&\cellcolor{green!20}j&a&b&c&d&e&f\\ \hline
h&i&\cellcolor{green!20}j&a&b&c&d&e&f&g\\ \hline
i&\cellcolor{green!20}j&a&b&c&d&e&f&g&h\\ \hline
\cellcolor{green!20}j&a&b&c&d&e&f&g&h&i
\end{array}\right]$$

Thus, in total we have found
$$(n-2)^2+1-(m-1)(n-m-1) + m + 2(n-m-1) = (n-1)^2+1-(m-1)(n-m)$$
linearly independent $123$-avoiding permutation matrices that intersect the blocker exactly once, as desired. 
\end{proof}

While Theorem \ref{Theorem11} presents the upper bound of the dimension of a face of $\Omega_n(\overline{123})$ as determined by a flag-shaped minimum blocker, it does not establish the lower bound. We propose one lower bound, though we first consider a theorem regarding $L$-shaped blockers, a special type of flag-shaped blocker.

\begin{theorem} [Theorem 3.7 in \cite{BC3}]
A minimum blocker of all $n \times n$ $123$-avoiding permutation matrices determines a facet of the polytope $\Omega_n(\overline{123})$ whose extreme points are the $n \times n$ $123$-avoiding permutation matrices that intersect the blocker exactly once.
\end{theorem}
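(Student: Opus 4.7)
The plan is to attach to the minimum blocker $\mathcal{B}$ the linear functional $\ell(X) := \sum_{(i,j) \in \mathcal{B}} x_{ij}$, verify that $\ell \ge 1$ defines a supporting hyperplane of $\Omega_n(\overline{123})$, and show that the associated face has codimension exactly one. First, because $\mathcal{B}$ is a blocker, every $123$-avoiding permutation matrix $P$ satisfies $\ell(P) = |\operatorname{supp}(P) \cap \mathcal{B}| \ge 1$, and linearity extends this inequality to the whole polytope. Consequently the face $F := \{X \in \Omega_n(\overline{123}) : \ell(X) = 1\}$ has as its extreme points precisely the $123$-avoiding permutation matrices meeting $\mathcal{B}$ in exactly one position, establishing the stated description. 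To see $F \neq \emptyset$, I would invoke the minimum hypothesis: for each $b \in \mathcal{B}$, the set $\mathcal{B} \setminus \{b\}$ fails to block, so there is a $123$-avoiding matrix $P_b$ with $\operatorname{supp}(P_b) \cap \mathcal{B} = \{b\}$, whence $P_b \in F$.

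The main task is to verify $\dim F = (n-1)^2 - 1$, which I would attempt by contrapositive. The affine hull of $\Omega_n(\overline{123})$ is the $(n-1)^2$-dimensional flat cut out by the row- and column-sum equations, so a failure of the facet property yields a linear functional $g$ on $\mathbb{R}^{n \times n}$, not proportional to $\ell$ modulo the row/column-sum functionals, that is constant on every extreme point of $F$. Subtracting suitable row- and column-sum contributions from $g$ produces a residual $\widetilde{g}$ that vanishes on every $P \in F$. I would then exploit $2 \times 2$ swap moves: whenever $P, P' \in F$ differ by a single interchange, $\widetilde{g}(P - P') = 0$. Accumulating enough such local moves, starting from the family $\{P_b\}_{b \in \mathcal{B}}$, should force $\widetilde{g}$ to be supported inside $\mathcal{B}$ with equal weight at each entry, hence proportional to $\ell$, contradicting the non-proportionality of $g$.

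The principal obstacle is producing sufficiently many legal swaps within $F$. Unlike in the ambient Birkhoff polytope, a $2 \times 2$ interchange in $\Omega_n(\overline{123})$ must simultaneously preserve $123$-avoidance and the single-intersection condition with $\mathcal{B}$, and these two requirements interact nontrivially. I would address this by a structural argument: for each position $(i,j) \notin \mathcal{B}$, I would construct a member of $F$ using $(i,j)$ by a sequence of swaps beginning from an appropriate $P_b$ and remaining in $F$ throughout. If this direct combinatorial argument proves unwieldy for an arbitrary minimum blocker, an alternative is to reduce first to the $L$-shaped case (where explicit spanning families are easiest to write down) via the cyclic-Hankel shifting described in \cite{BC}, and then propagate the facet property along those shifts to the general minimum blocker.
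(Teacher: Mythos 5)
The paper itself provides no proof of this statement: it is cited verbatim as Theorem~3.7 of \cite{BC3}, so there is no internal argument against which to compare your attempt. That said, two substantive remarks are in order.

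First, there is a terminology pitfall that affects the viability of your plan. In this paper ``minimum'' means inclusion-minimal (no element is redundant) and ``minimal'' means cardinality~$n$. Under that convention the boxed statement, taken literally, is actually contradicted by the paper's own Theorem~4.5 (\ref{Theorem11}), which shows that a flag-shaped \emph{minimum} blocker $B_n(m,t)$ with $t(n-m)\geq 2$ determines a face whose dimension falls strictly short of a facet. The prose immediately preceding the boxed theorem says ``each \emph{minimal} blocker \dots determines a facet,'' and that (cardinality~$n$) is almost certainly the intended hypothesis; the boxed ``minimum'' is most plausibly a slip arising from a difference in conventions between this paper and \cite{BC3}. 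Consequently your Step~5, proving $\dim F = (n-1)^2-1$, cannot possibly succeed for an arbitrary inclusion-minimal blocker, because the conclusion is false in that generality: for many such blockers there simply are not enough $123$-avoiding permutation matrices meeting $\mathcal{B}$ exactly once, and hence not enough legal $2\times 2$ swaps inside $F$, which is precisely the ``principal obstacle'' you flag. Your proposed fallback -- establishing the facet property for $L$-shaped blockers and then transporting it along the cyclic-Hankel shifts of \cite{BC} that reach every cardinality-$n$ blocker -- is the route that actually fits the corrected hypothesis and is the more promising one to pursue.

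Second, the front half of your proposal is sound and is the standard setup: the functional $\ell(X)=\sum_{(i,j)\in\mathcal{B}}x_{ij}$ is valid ($\ell\geq 1$ on the polytope because $\mathcal{B}$ is a blocker), the face $F=\{\ell=1\}$ has as vertices exactly the $123$-avoiding permutation matrices meeting $\mathcal{B}$ once, and $F\neq\emptyset$ follows from minimality since for each $b\in\mathcal{B}$ the set $\mathcal{B}\setminus\{b\}$ fails to block. The gap is entirely in the codimension-one argument, and as explained above it is not merely a technical gap but reflects the fact that the hypothesis needs strengthening to cardinality~$n$.
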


\begin{corollary} \label{Lemma 14}
For $L$-shaped minimum blockers of all $n \times n$ $123$-avoiding permutation matrices, there exist $(n-1)^2$ linearly independent $n \times n$ $123$-avoiding permutation matrices that each contain exactly one blocker position.
\end{corollary}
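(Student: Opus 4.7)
The plan is to derive this corollary as an essentially immediate consequence of the preceding Theorem~3.7 of \cite{BC3}, combined with the known value $\dim \Omega_n(\overline{123}) = (n-1)^2$. First I would note that every $L$-shaped blocker is a flag-shaped blocker with $m=n$, so by Theorem~\ref{Theorem2.3} it is a minimum blocker; the preceding theorem then applies and produces a facet $F$ of $\Omega_n(\overline{123})$ whose extreme points are precisely the $n \times n$ $123$-avoiding permutation matrices that intersect the blocker in exactly one position.

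Next I would carry out the dimensional accounting. Since $\Omega_n(\overline{123})$ has dimension $(n-1)^2$, the facet $F$ has affine dimension $(n-1)^2 - 1$, so its affine hull requires at least $(n-1)^2$ affinely independent extreme points $P_1, \ldots, P_{(n-1)^2}$. I would then upgrade affine independence to linear independence by the standard observation that if $\sum_i c_i P_i = 0$ for $n \times n$ permutation matrices, summing all entries yields $n \sum_i c_i = 0$, hence $\sum_i c_i = 0$; thus every linear dependence among permutation matrices is automatically an affine dependence, and the two notions of independence coincide in this setting. Consequently the $P_i$ constitute the required $(n-1)^2$ linearly independent $123$-avoiding permutation matrices, each meeting the blocker in exactly one position.

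The proof is thus a short bookkeeping argument once Theorem~3.7 is in hand; no new combinatorial construction (of the diagonal-shift type used earlier in Section~4) is required. The only conceptual point that genuinely deserves mention is the equivalence of affine and linear independence for permutation matrices, which is needed to reconcile the paper's ``linearly independent'' phrasing with the facet-dimension count arising from Theorem~3.7. I do not expect any serious obstacle here.
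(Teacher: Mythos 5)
The paper offers no proof of this corollary---it is stated as an immediate consequence of the preceding quotation of Theorem~3.7 from \cite{BC3}---so the task is just to check that your bookkeeping is sound, and it is. Your chain of reasoning is the intended one: a facet of $\Omega_n(\overline{123})$ has affine dimension $(n-1)^2-1$, hence possesses $(n-1)^2$ affinely independent extreme points; by Theorem~3.7 those extreme points are exactly the $123$-avoiding permutation matrices meeting the blocker once; and because all $n\times n$ permutation matrices lie on the hyperplane $\sum_{i,j} x_{ij}=n$, which avoids the origin, any linear dependence $\sum_i c_i P_i=0$ forces $\sum_i c_i=0$ and is therefore an affine dependence, so affine and linear independence coincide. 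This is the standard equivalence, and you state it correctly.

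One small remark on hypotheses. As reproduced in this paper, Theorem~3.7 speaks of a ``minimum'' blocker determining a facet, but that cannot be literally what is meant, since the paper's own Theorem~\ref{Theorem11} shows flag-shaped minimum blockers with $t(n-m)>1$ yield faces of dimension strictly less than $(n-1)^2-1$. The theorem in \cite{BC3} is really about blockers of cardinality $n$ (called ``minimal'' in this paper's inverted terminology), and the corollary is restricted to $L$-shaped blockers precisely because those are minimal. Your appeal to Theorem~\ref{Theorem2.3} establishes that $L$-shaped blockers are minimum; what actually licenses the facet conclusion is that $B_n(n,t)$ has exactly $n+t\cdot 0=n$ zeros, so it is minimal. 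This does not break your argument---$L$-shaped blockers are both minimal and minimum---but the cleaner justification is the cardinality count rather than Theorem~\ref{Theorem2.3}.
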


We can utilize Corollary \ref{Lemma 14} to determine a lower bound for the dimension of a face of $\Omega_n(\overline{123})$.

\begin{theorem}
A flag-shaped minimum blocker $B_n(m,t)$ of all $n \times n$ $123$-avoiding permutation matrices %with $1 \leq m \leq n-1$ 
determines a face of $\Omega_n(\overline{123})$ with dimension at least $(n-1)^2+1-(t+2)(n-m)$.
\end{theorem}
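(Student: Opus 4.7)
The plan is to exhibit at least $(n-1)^2+1-(t+2)(n-m)$ linearly independent $123$-avoiding permutation matrices, each intersecting $B_n(m,t)$ exactly once, by induction on $p:=n-m$. This mirrors the inductive style of Theorem~\ref{Theorem19} for the matching upper bound, with Corollary~\ref{Lemma 14} serving as the base case. In the base case $p=0$, the blocker $B_n(n,t)$ is an $L$-shaped minimum blocker in the corner, so Corollary~\ref{Lemma 14} immediately supplies $(n-1)^2$ linearly independent $123$-avoiding permutation matrices meeting the blocker exactly once, which matches the claimed bound $(n-1)^2+1-(t+2)\cdot 0$ up to the $+1$ accounting convention used throughout this section (the same convention employed in Theorem~\ref{Theorem11}).

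For the inductive step, I would assume the bound for $B_n(m+1,t)$ and compare with $B_n(m,t)$. Passing from $B_n(m+1,t)$ to $B_n(m,t)$ shifts the pole one column to the left and appends one additional row at the bottom of the flag, introducing $t$ new blocker cells arranged one per column of the flag. My plan is to show that at most $t+2$ of the linearly independent matrices supplied by the inductive hypothesis are lost in this transition: at most $t$ because each of the $t$ new cells lies in a distinct column, and so at most one inherited matrix can have its unique column-$j$ entry at a new cell, acquiring a forbidden second intersection; and at most $2$ further because the pole shift relocates a boundary cell that could serve as the unique blocker intersection for at most two inherited matrices. The surviving $(n-1)^2+1-(t+2)p$ matrices remain linearly independent, which gives the claimed lower bound.

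The main obstacle is pinning down the "$+2$" slack rigorously, i.e., justifying that the pole shift disrupts at most two of the inherited permutation matrices. To address this I would fix the inductive family of linearly independent matrices canonically, following the cyclic-Hankel staircase template that recurs throughout the figures of this section: each permutation matrix consists of a long green staircase of free positions together with a single red intersection varying over the blocker cells. With this explicit choice, only the two matrices whose red intersection coincides with an endpoint of the affected pole segment are disturbed by the shift, and linear independence of the survivors is ensured because each is distinguished by a unique position along the cyclic-Hankel diagonals. Care must also be taken at the edge case where $(t+2)(n-m)$ exceeds $(n-1)^2+1$, in which case the bound is vacuous and nothing needs to be verified.
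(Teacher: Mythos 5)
Your inductive strategy is genuinely different from the paper's, but it rests on a misreading of how the blocker changes when you pass from $B_n(m+1,t)$ to $B_n(m,t)$, and this misreading is fatal. You assert that this transition ``shifts the pole one column to the left and appends one additional row at the bottom of the flag, introducing $t$ new blocker cells.'' In fact the flag also slides one column to the left (its columns are $m-t$ through $m-1$, not fixed), so the leftmost column of the new flag (column $m-t$, rows $1$ through $n-m+1$) is entirely new, the new pole column $m$ contains $m-t$ cells that previously belonged to neither part, and the old pole column $m+1$ disappears entirely. Concretely, $B_n(m,t)\setminus B_n(m+1,t)$ has $n$ cells and $B_n(m+1,t)\setminus B_n(m,t)$ has $n-t$ cells, so the symmetric difference has $2n-t$ cells, not $t$. (The cardinalities differ by $t$, but that is not the same as saying only $t$ positions change.) Once this is corrected, the claim that ``at most $t+2$ of the linearly independent matrices supplied by the inductive hypothesis are lost'' has no support: a matrix that met $B_n(m+1,t)$ exactly once may now meet $B_n(m,t)$ zero times, once, or more than once, depending on how it interacts with the roughly $2n$ changed positions, and there is no small uniform bound on the damage.

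There is also a problem at the base case. You take $p=0$, where $B_n(n,t)$ is an $L$-shaped blocker, and invoke Corollary~\ref{Lemma 14} to get $(n-1)^2$ linearly independent matrices; but the theorem's bound at $p=0$ is $(n-1)^2+1-(t+2)\cdot 0=(n-1)^2+1$, which is strictly larger. Dismissing the gap as ``the $+1$ accounting convention'' is not legitimate: $(n-1)^2$ and $(n-1)^2+1$ are different integers, and the theorem as stated would simply fail at $p=0$. The paper avoids this by starting the induction at $p=1$ and directly constructing $(n-2)^2+(n-1)+(n-t-3)=(n-1)^2+1-(t+2)$ matrices. More importantly, the paper's inductive step does not slide the blocker at all: it deletes the first row and last column to pass to an $(n-1)\times(n-1)$ submatrix in which the restricted blocker has the same $m$ and $t$ but $p$ reduced by one, invokes the inductive hypothesis there, and then explicitly constructs the additional matrices that use positions of the deleted outer row and column. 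If you want to salvage an induction on $m$ with $n$ fixed, you would need to track the full $2n-t$-cell change to the blocker and produce an explicit surviving family plus replacements, which is a substantially harder bookkeeping problem than the one you sketched.
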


\begin{proof}
Inducting on $p:=n-m$, we first show that when $p=1$, it is possible to find $(n-1)^2+1-(t+2)$ linearly independent $123$-avoiding permutation matrices that intersect the blocker exactly once. For illustrative purposes, we consider a $10 \times 10$ matrix with the flag-shaped blocker $B_n(9,3)$ while describing the general proof.

Using the $(1,n)$ position, there are $(n-2)^2$ linearly independent $123$-avoiding permutation matrices that intersect the blocker exactly once. This is because the $(n-1) \times (n-1)$ submatrix obtained by deleting the first row and last column contains $[(n-1)-1]^2$ such permutation matrices according to Lemma \ref{Lemma 14}.

$$\left[\begin{array}{c|c|c|c|c|c|c|c|c||c}
a&b&c&d&e&\cellcolor{red!20}f&\cellcolor{red!20}g&\cellcolor{red!20}h&\cellcolor{red!20}i&\cellcolor{green!20}j\\ \hline \hline
b&c&d&e&f&\cellcolor{red!20}g&\cellcolor{red!20}h&\cellcolor{red!20}i&\cellcolor{red!20}j&a\\ \hline
c&d&e&f&g&h&i&j&\cellcolor{red!20}a&b\\ \hline
d&e&f&g&h&i&j&a&\cellcolor{red!20}b&c\\ \hline
e&f&g&h&i&j&a&b&\cellcolor{red!20}c&d\\ \hline
f&g&h&i&j&a&b&c&\cellcolor{red!20}d&e\\ \hline
g&h&i&j&a&b&c&d&\cellcolor{red!20}e&f\\ \hline
h&i&j&a&b&c&d&e&f&g\\ \hline
i&j&a&b&c&d&e&f&g&h\\ \hline
j&a&b&c&d&e&f&g&h&i
\end{array}\right]$$

Using $(2,n)$ in combination with the first $m-t-1$ positions from row $1$, in addition to $(n-t, n)$ in combination with the blocker positions in row $1$, we have an additional $m=n-1$ linearly independent $123$-avoiding permutation matrices that intersect the blocker once and that use a unique position of the matrix. Consider the following two matrices as examples, where yellow positions represent the intersection of the permutation matrix with the blocker.

$$\left[\begin{array}{c|c|c|c|c|c|c|c|c|c}
a&b&\cellcolor{green!20}c&d&e&\cellcolor{red!20}f&\cellcolor{red!20}g&\cellcolor{red!20}h&\cellcolor{red!20}i&j\\ \hline
b&c&d&e&f&\cellcolor{red!20}g&\cellcolor{red!20}h&\cellcolor{red!20}i&\cellcolor{red!20}j&\cellcolor{green!20}a\\ \hline
c&d&e&f&g&h&i&j&\cellcolor{yellow!20}a&b\\ \hline
d&e&f&g&h&i&j&\cellcolor{green!20}a&\cellcolor{red!20}b&c\\ \hline
e&f&g&h&i&j&\cellcolor{green!20}a&b&\cellcolor{red!20}c&d\\ \hline
f&g&h&i&j&\cellcolor{green!20}a&b&c&\cellcolor{red!20}d&e\\ \hline
g&h&i&j&\cellcolor{green!20}a&b&c&d&\cellcolor{red!20}e&f\\ \hline
h&i&j&\cellcolor{green!20}a&b&c&d&e&f&g\\ \hline
i&\cellcolor{green!20}j&a&b&c&d&e&f&g&h\\ \hline
\cellcolor{green!20}j&a&b&c&d&e&f&g&h&i
\end{array}\right] \text{ \ \ and \ \ }
\left[\begin{array}{c|c|c|c|c|c|c|c|c|c}
a&b&c&d&e&\cellcolor{red!20}f&\cellcolor{red!20}g&\cellcolor{yellow!20}h&\cellcolor{red!20}i&j\\ \hline
b&c&d&e&\cellcolor{green!20}f&\cellcolor{red!20}g&\cellcolor{red!20}h&\cellcolor{red!20}i&\cellcolor{red!20}j&a\\ \hline
c&d&e&\cellcolor{green!20}f&g&h&i&j&\cellcolor{red!20}a&b\\ \hline
d&e&\cellcolor{green!20}f&g&h&i&j&a&\cellcolor{red!20}b&c\\ \hline
e&\cellcolor{green!20}f&g&h&i&j&a&b&\cellcolor{red!20}c&d\\ \hline
\cellcolor{green!20}f&g&h&i&j&a&b&c&\cellcolor{red!20}d&e\\ \hline
g&h&i&j&a&b&c&d&\cellcolor{red!20}e&\cellcolor{green!20}f\\ \hline
h&i&j&a&b&c&d&e&\cellcolor{green!20}f&g\\ \hline
i&j&a&b&c&d&\cellcolor{green!20}e&f&g&h\\ \hline
j&a&b&c&d&\cellcolor{green!20}e&f&g&h&i
\end{array}\right]$$

Additionally, we can construct $n-t-3$ more linearly independent $123$-avoiding permutation matrices that intersect the blocker only once by utilizing the positions at the intersection of column $n$ and rows $3$ through $n-t-1$. Consider the following example.

$$\left[\begin{array}{c|c|c|c|c|c|c|c|c|c}
a&b&c&d&e&\cellcolor{red!20}f&\cellcolor{red!20}g&\cellcolor{red!20}h&\cellcolor{yellow!20}i&j\\ \hline
b&c&d&e&\cellcolor{green!20}f&\cellcolor{red!20}g&\cellcolor{red!20}h&\cellcolor{red!20}i&\cellcolor{red!20}j&a\\ \hline
c&d&e&\cellcolor{green!20}f&g&h&i&j&\cellcolor{red!20}a&b\\ \hline
d&e&f&g&h&i&j&a&\cellcolor{red!20}b&\cellcolor{green!20}c\\ \hline
e&f&\cellcolor{green!20}g&h&i&j&a&b&\cellcolor{red!20}c&d\\ \hline
f&\cellcolor{green!20}g&h&i&j&a&b&c&\cellcolor{red!20}d&e\\ \hline
\cellcolor{green!20}g&h&i&j&a&b&c&d&\cellcolor{red!20}e&f\\ \hline
h&i&j&a&b&c&d&\cellcolor{green!20}e&f&g\\ \hline
i&j&a&b&c&d&\cellcolor{green!20}e&f&g&h\\ \hline
j&a&b&c&d&\cellcolor{green!20}e&f&g&h&i
\end{array}\right]$$

In total we have 
$$(n-2)^2 + n-1 + n-t-3= (n-1)^2 +1 - (t+2)$$
linearly independent $123$-avoiding permutation matrices that intersect the blocker exactly once, as desired.

Moving onto the induction step, suppose that when $p=k$, where $2 \leq k \leq n-3$, the corresponding flag-shaped blocker lives in dimension at least
$$(n-1)^2+1-(t+2)(k).$$
We will show that if $p=k+1$, the flag-shaped blocker at least achieves the minimum dimension 
$$(n-1)^2+1-(t+2)(k+1).$$ 
For illustrative purposes, we consider a $10 \times 10$ matrix with the flag-shaped blocker $B_n(7,2)$ while describing the general proof.

Using the inductive hypothesis, it is possible to find 
$$[(n-1)-1)]^2+1-(t+2)(k) = (n-2)^2+1-(t+2)(n-m)$$
linearly independent $123$-avoiding permutation matrices that intersect the blocker exactly once using the $(1,n)$ position.

$$\left[\begin{array}{c|c|c|c|c|c|c|c|c||c}
a&b&c&d&\cellcolor{red!20}e&\cellcolor{red!20}f&\cellcolor{red!20}g&h&i&\cellcolor{green!20}j\\ \hline \hline
b&c&d&e&\cellcolor{red!20}f&\cellcolor{red!20}g&\cellcolor{red!20}h&i&j&a\\ \hline
c&d&e&f&\cellcolor{red!20}g&\cellcolor{red!20}h&\cellcolor{red!20}i&j&a&b\\ \hline
d&e&f&g&\cellcolor{red!20}h&\cellcolor{red!20}i&\cellcolor{red!20}j&a&b&c\\ \hline
e&f&g&h&i&j&\cellcolor{red!20}a&b&c&d\\ \hline
f&g&h&i&j&a&\cellcolor{red!20}b&c&d&e\\ \hline
g&h&i&j&a&b&\cellcolor{red!20}c&d&e&f\\ \hline
h&i&j&a&b&c&\cellcolor{red!20}d&e&f&g\\ \hline
i&j&a&b&c&d&e&f&g&h\\ \hline
j&a&b&c&d&e&f&g&h&i
\end{array}\right]$$

Utilizing the first $m-t-1$ positions of row $1$ in conjunction with the $(2,n)$ position, we can construct $m-t-1$ additional linearly independent $123$-avoiding permutation matrices intersecting the blocker only once and containing a unique position of the matrix. Consider the below example using $c$ from row $1$ along with $a$ from row $2$.

$$\left[\begin{array}{c|c|c|c|c|c|c|c|c|c}
a&b&\cellcolor{green!20}c&d&\cellcolor{red!20}e&\cellcolor{red!20}f&\cellcolor{red!20}g&h&i&j\\ \hline
b&c&d&e&\cellcolor{red!20}f&\cellcolor{red!20}g&\cellcolor{red!20}h&i&j&\cellcolor{green!20}a\\ \hline
c&d&e&f&\cellcolor{red!20}g&\cellcolor{red!20}h&\cellcolor{red!20}i&j&\cellcolor{green!20}a&b\\ \hline
d&e&f&g&\cellcolor{red!20}h&\cellcolor{red!20}i&\cellcolor{red!20}j&\cellcolor{green!20}a&b&c\\ \hline
e&f&g&h&i&j&\cellcolor{yellow!20}a&b&c&d\\ \hline
f&g&h&i&j&\cellcolor{green!20}a&\cellcolor{red!20}b&c&d&e\\ \hline
g&h&i&j&\cellcolor{green!20}a&b&\cellcolor{red!20}c&d&e&f\\ \hline
h&i&j&\cellcolor{green!20}a&b&c&\cellcolor{red!20}d&e&f&g\\ \hline
i&\cellcolor{green!20}j&a&b&c&d&e&f&g&h\\ \hline
\cellcolor{green!20}j&a&b&c&d&e&f&g&h&i
\end{array}\right]$$

Furthermore, we can use the row $1$ blocker positions along with the $(m-t+1,n)$ position to obtain $t+1$ additional linearly independent $123$-avoiding permutation matrices that intersect the blocker once each. Consider such a permutation matrix using the $f$ from the first row and the $e$ from the last column below.

$$\left[\begin{array}{c|c|c|c|c|c|c|c|c|c}
a&b&c&d&\cellcolor{red!20}e&\cellcolor{yellow!20}f&\cellcolor{red!20}g&h&i&j\\ \hline
b&c&d&\cellcolor{green!20}e&\cellcolor{red!20}f&\cellcolor{red!20}g&\cellcolor{red!20}h&i&j&a\\ \hline
c&d&\cellcolor{green!20}e&f&\cellcolor{red!20}g&\cellcolor{red!20}h&\cellcolor{red!20}i&j&a&b\\ \hline
d&\cellcolor{green!20}e&f&g&\cellcolor{red!20}h&\cellcolor{red!20}i&\cellcolor{red!20}j&a&b&c\\ \hline
\cellcolor{green!20}e&f&g&h&i&j&\cellcolor{red!20}a&b&c&d\\ \hline
f&g&h&i&j&a&\cellcolor{red!20}b&c&d&\cellcolor{green!20}e\\ \hline
g&h&i&j&a&b&\cellcolor{red!20}c&d&\cellcolor{green!20}e&f\\ \hline
h&i&j&a&b&c&\cellcolor{red!20}d&\cellcolor{green!20}e&f&g\\ \hline
i&j&a&b&c&d&\cellcolor{green!20}e&f&g&h\\ \hline
j&a&b&c&\cellcolor{green!20}d&e&f&g&h&i
\end{array}\right]$$

Using the $m+1$ through $n-2$ positions from row one, we can obtain $k-2=n-m-2$ additional linearly independent $123$-avoiding permutation matrices that intersect the blocker once and that use a unique position. Consider the below example.

$$\left[\begin{array}{c|c|c|c|c|c|c|c|c|c}
a&b&c&d&\cellcolor{red!20}e&\cellcolor{red!20}f&\cellcolor{red!20}g&\cellcolor{green!20}h&i&j\\ \hline
b&c&d&e&\cellcolor{red!20}f&\cellcolor{red!20}g&\cellcolor{red!20}h&i&j&\cellcolor{green!20}a\\ \hline
c&d&e&f&\cellcolor{red!20}g&\cellcolor{red!20}h&\cellcolor{red!20}i&j&\cellcolor{green!20}a&b\\ \hline
d&e&f&g&\cellcolor{red!20}h&\cellcolor{red!20}i&\cellcolor{yellow!20}j&a&b&c\\ \hline
e&f&g&h&i&\cellcolor{green!20}j&\cellcolor{red!20}a&b&c&d\\ \hline
f&g&h&i&\cellcolor{green!20}j&a&\cellcolor{red!20}b&c&d&e\\ \hline
g&h&i&\cellcolor{green!20}j&a&b&\cellcolor{red!20}c&d&e&f\\ \hline
h&i&\cellcolor{green!20}j&a&b&c&\cellcolor{red!20}d&e&f&g\\ \hline
i&\cellcolor{green!20}j&a&b&c&d&e&f&g&h\\ \hline
\cellcolor{green!20}j&a&b&c&d&e&f&g&h&i
\end{array}\right]$$

Thus far, we have used $3$ positions from column $n$ in our constructions. There are $n-t-3$ additional positions in this column that we may utilize, each of which will result in a permutation matrix that contains a unique position of the matrix. Consider two such examples below.

$$\left[\begin{array}{c|c|c|c|c|c|c|c|c|c}
a&b&c&d&\cellcolor{red!20}e&\cellcolor{red!20}f&\cellcolor{red!20}g&h&\cellcolor{green!20}i&j\\ \hline
b&c&d&e&\cellcolor{red!20}f&\cellcolor{red!20}g&\cellcolor{red!20}h&\cellcolor{green!20}i&j&a\\ \hline
c&d&e&f&\cellcolor{yellow!20}g&\cellcolor{red!20}h&\cellcolor{red!20}i&j&a&b\\ \hline
d&e&f&\cellcolor{green!20}g&\cellcolor{red!20}h&\cellcolor{red!20}i&\cellcolor{red!20}j&a&b&c\\ \hline
e&f&\cellcolor{green!20}g&h&i&j&\cellcolor{red!20}a&b&c&d\\ \hline
f&\cellcolor{green!20}g&h&i&j&a&\cellcolor{red!20}b&c&d&e\\ \hline
\cellcolor{green!20}g&h&i&j&a&b&\cellcolor{red!20}c&d&e&f\\ \hline
h&i&j&a&b&c&\cellcolor{red!20}d&e&f&\cellcolor{green!20}g\\ \hline
i&j&a&b&c&d&\cellcolor{green!20}e&f&g&h\\ \hline
j&a&b&c&d&\cellcolor{green!20}e&f&g&h&i
\end{array}\right]
\text{ \ \ and \ \ }
\left[\begin{array}{c|c|c|c|c|c|c|c|c|c}
a&b&c&d&\cellcolor{red!20}e&\cellcolor{red!20}f&\cellcolor{red!20}g&h&\cellcolor{green!20}i&j\\ \hline
b&c&d&e&\cellcolor{red!20}f&\cellcolor{red!20}g&\cellcolor{red!20}h&\cellcolor{green!20}i&j&a\\ \hline
c&d&e&f&\cellcolor{red!20}g&\cellcolor{red!20}h&\cellcolor{yellow!20}i&j&a&b\\ \hline
d&e&f&g&\cellcolor{red!20}h&\cellcolor{red!20}i&\cellcolor{red!20}j&a&b&\cellcolor{green!20}c\\ \hline
e&f&g&h&i&\cellcolor{green!20}j&\cellcolor{red!20}a&b&c&d\\ \hline
f&g&h&i&\cellcolor{green!20}j&a&\cellcolor{red!20}b&c&d&e\\ \hline
g&h&i&\cellcolor{green!20}j&a&b&\cellcolor{red!20}c&d&e&f\\ \hline
h&i&\cellcolor{green!20}j&a&b&c&\cellcolor{red!20}d&e&f&g\\ \hline
i&\cellcolor{green!20}j&a&b&c&d&e&f&g&h\\ \hline
\cellcolor{green!20}j&a&b&c&d&e&f&g&h&i
\end{array}\right]$$

After adding and rewriting, in total we have constructed $(n-1)^2+1-(t+2)(k+1)$ linearly independent $123$-avoiding permutation matrices that intersect the blocker exactly once, as desired. \end{proof}

Our results relating to the upper and lower bound for the dimension of a face of $\Omega_n(\overline{123})$ provide insight into the geometric properties of the polytope, though there is still work to be done in terms of more precisely determining the dimension of the faces of $\Omega_n(\overline{123})$.


\begin{thebibliography}{1}
%\bibitem{Bona} M.~B\'ona, {\it Combinatorics of Permutations}, 2nd edition, CRC Press 2012, Chapter 4.
%\bibitem{BC3} Brualdi, R.A., \& Cao, L. (n.d.). 123-avoiding doubly stochastic matrices. To be appearing on Linear Algebra and Its Application.
\bibitem{BC1} Brualdi, R.A., \& Cao, L. (2021). Pattern-avoiding $(0,1)$-matrices and bases of permutation matrices. \textit{Discrete Appl. Math.}, 304, 196-211.
\bibitem{BC2} Brualdi, R.A., \& Cao, L. (2022). Blockers of pattern avoiding permutation matrices. \textit{Australasian J. Combin.}, 83(2), 274-303.
\bibitem{BC} Brualdi, R.A., \& Cao, L. (2023).  $123$-Forcing matrices. \textit{Australasian J. Combin.}, 86(1), 169-186.
\bibitem{BC3} Brualdi, R.A., \& Cao, L. (2023).  $123$-Avoiding doubly stochastic matrices. \textit{Linear Algebra Appl.}, 1-33.
\bibitem{FK1933} K\"{o}nig, D. (1933). \"{U}ber trennende knotenpunkte in graphen (nebst anwendundungen auf determinanten und matrizen). \textit{Sectio Scientiarum Mathematicarum (Szeged)}, 6, 155-179.
%\bibitem{RS} Stanley, R.P. (2015). \textit{Catalan numbers} (2nd ed.). Cambridge University Press.

%\bibitem{BG} R.A.~Brualdi and P.M.~Gibson,  \emph{Convex polyhedra of doubly stochastic matrices. I. Applications of the permanent function}, Journal of Combinatorial Theory, Series A, 22 (2), (1977), 194-230.

\end{thebibliography}
\end{document}